\setlist[enumerate]{label=({\roman*})}
\newtheorem{theorem}{Theorem}
\newtheorem{proposition}[theorem]{Proposition}
\newtheorem{corollary}[theorem]{Corollary}
\newtheorem{lemma}[theorem]{Lemma}
\theoremstyle{definition}
\newtheorem{remark}[theorem]{Remark}
\numberwithin{equation}{section}
\numberwithin{theorem}{section}
\def\<{\langle}
\def\>{\rangle}
\def\N{\mathbb{N}}
\def\Z{\mathbb{Z}}
\def\R{\mathbb{R}}
\def\C{\mathbb{C}}
\def\bGamma{\Bar{\Gamma}}
\begin{document}
\bibliographystyle{plain} \title[Periodic orbits in the homology group of a knot complement]
{Distribution of periodic orbits in the homology group of a knot complement} 

\author{Solly Coles} 
\address{Solly Coles, Department of Mathematics, Northwestern University,
Evanston, IL 60208, USA}
\email{solly.coles@northwestern.edu}

\author{Richard Sharp} 
\address{Richard Sharp, Mathematics Institute, University of Warwick,
Coventry CV4 7AL, UK}
\email{R.J.Sharp@warwick.ac.uk}


\keywords{}


\begin{abstract}
Consider a transitive Anosov flow on a closed $3$-manifold. After removing a finite set of 
null-homologous periodic 
orbits, we study the distribution of the remaining periodic orbits in the homology 
of the knot complement. 
\end{abstract}

\maketitle

%
%
%
%
%
%
%
%
%

\section{Introduction}

In a recent paper \cite{mcmullen}, McMullen discussed the way in which periodic orbits 
$\mathscr P$ of the geodesic
flow on the unit-tangent bundle of a compact negatively curved surface (and more general Anosov flows on 3-manifolds) behave like prime numbers, in the sense that they obey a 
\emph{Chebotarev law}. More precisely, if $M_N$ is the knot complement obtained by removing
$N$ periodic orbits $\{K_1,\ldots,K_N\}$ and $G$ is a finite quotient of $\pi_1(M_N)$ then
each of the remaining periodic orbits $\gamma \in \mathscr P_N :=\mathscr P\setminus \{K_1,\ldots,K_N\}$ induces a well-defined conjugacy class $\langle \gamma\rangle_G$
in $G$ and these
conjugacy classes are equidistributed in the sense that, writing $\ell(\gamma)$ for the least period of $\gamma$, for every conjugacy class $C \subset G$ we have
\[
\lim_{T \to \infty} \frac{|\{\gamma \in \mathscr P_N \hbox{ : } \ell(\gamma) \le T \,
\langle \gamma \rangle_G = C\}|}{|\{\gamma \in \mathscr P_N \hbox{ : } \ell(\gamma) \le T\}|}
= \frac{|C|}{|G|},
\]
where $|A|$ denotes the cardinality of $A$.
(See also Ueki \cite{ueki} for results on modular knots.)
The aim of this paper is to provide precise asymptotic information about how the periodic orbits in $\mathscr P_N$ link with the removed orbits $\{K_1,\ldots,K_N\}$.
This may be viewed as a question about how the orbits are distributed in the homology of
the knot complement $H_1(M_N,\Z)$.
The distribution of periodic orbits in the first homology group of $M$ is well understood 
(\cite{Anan}, \cite{bab-led}, \cite{KS}, \cite{Kotani}, \cite{lalley}, \cite{Sh93}) and the
above problem
may be viewed as a generalisation of this theory.
Many of the ideas introduced by McMullen will be invaluable to our analysis.

We shall begin by recalling some classical distribution results for periodic orbits.
Let $M$ be a smooth closed (compact and without boundary) oriented Riemannian 3-manifold
and let $X^t : M \to M$ be a weak-mixing transitive Anosov flow. This flow has a countably infinite set 
$\mathscr P$ of prime periodic orbits: we denote a typical such orbits by $\gamma$ and its least 
period by $\ell(\gamma)$. The following asymptotic formula is due to Margulis \cite{margulis1}, \cite{margulis2}:
\[
|\{\gamma \in \mathscr P \hbox{ : } \ell(\gamma) \le T\}| \sim \frac{e^{h(X)T}}{h(X)T},
\quad \mbox{as } T \to \infty,
\]
where $h(X)$ is the topological entropy of $X$. 

We may also consider how the periodic orbits are distributed with respect to the homology of $M$.
Let $b=b_1(M)$ denote the first Betti number of $M$,
which we shall assume to be at least $1$.
For $\gamma \in \mathcal P$, we shall write $[\gamma]$ for the torsion-free part of the homology class
of $\gamma$ in $H_1(M,\Z)$; we may identify this with an element of $\Z^b$. Write
$\mathscr C$ for the closure of the set of normalised homology classes,
\begin{equation}\label{def of scrC}
\mathscr{C} = \overline{\left\{\frac{[\gamma]}{\ell(\gamma)} \hbox{ : } \gamma \in \mathscr P\right\}} \subset \R^b.
\end{equation}
This is a compact convex set with non-empty interior. We say that $X^t$ is 
{\it homologically full} if the interior of $\mathscr C$ contains zero.
Assuming for the moment that $X$ is homologically full,
write $\mathscr P_0 = \{\gamma \in \mathscr P \hbox{ : } [\gamma] =0\}$.
Then there exists $0<h^* \le h(X)$ and $C>0$ such that
\[
|\{\gamma \in \mathscr P_0 \hbox{ : } \ell(\gamma) \le T\}| \sim C \frac{e^{h^*T}}{T^{1+b/2}},
\quad \mbox{as } T \to \infty,
\]
and a similar formula for $[\gamma]=\alpha$ for any fixed $\alpha \in H_1(M,\Z)$
(though the constant $C$ may change)
\cite{Sh93}. 

Dropping the assumption that $X$ is homologically full, more general results hold where, instead of restricting $\gamma$ to be null-homologous (or to lie in a fixed homology class), 
we consider $\gamma$ such that $[\gamma]$ is approximately $T\rho$, where $\rho \in \mathrm{int}(\mathscr C)$ (Lalley \cite{lalley} for geodesic flows over surfaces, Babillot and Ledrappier \cite{bab-led} for the general case). The right hand side then takes the form
$
C_\rho(T) e^{H(\rho)T}T^{-(1+b/2)},
$
where $H : \mathrm{int}(\mathscr C) \to \mathbb R^+$ is a natural entropy function and $C_\rho(T)$
an explicit continuous function which 
is bounded above and bounded below away from zero.

The object of this paper will be to consider analogous problems for the knot complements
introduced above, inspired by the approach of McMullen
\cite{mcmullen}.
Let $K_1,\ldots,K_n \in \mathscr P_0$ be a finite set of (integral) null-homologous periodic orbits,
which we think of as knots in $M$, and consider
the knot complement $M_N := M\setminus \{K_1,\ldots,K_N\}$. Then, 
for any $\gamma \in \mathscr P_N := \mathscr P\setminus \{K_1,\ldots,K_N\}$, 
we can consider the torsion-free part of its homology class
$[\gamma]_N \in H_1(M_N,\Z)/\mathrm{torsion}$.
Let 
\[
\mathscr C_N = \overline{\left\{\frac{[\gamma]_N}{\ell(\gamma)} \hbox{ : } \gamma \in \mathscr P_N\right\}} \subset H_1(M_N,\R) \cong \R^{b+N}.
\]
Write
\[
\pi_N(T,\lfloor \rho T\rfloor) := 
|\{\gamma \in \mathscr P_N \hbox{ : } \ell(\gamma) \le T, \ 
[\gamma]_N= \lfloor \rho T \rfloor
\}|,
\]
where $\lfloor \cdot \rfloor$ is defined
by choosing a fundamental domain $\mathfrak F_N$ for $H_1(M_N,\Z)$ as a lattice
in $H_1(M_N,\R)$ and setting $\lfloor \rho \rfloor$ to be the unique
element of $H_1(M_N,\Z)/\mathrm{torsion}$ satisfying $\rho - \lfloor \rho \rfloor \in \mathfrak F_N$.
Here is a sample theorem.

\begin{theorem}\label{thm:main-intro}
Let $M$ be a smooth closed oriented Riemannian $3$-manifold and let $X^t : M \to M$ be 
a transitive Anosov flow which mixes exponentially fast. There is a strictly positive
real analytic function $\mathfrak h_N : \mathrm{int}(\mathscr C_N) \to \R^+$ such that, for each
$\rho \in \mathrm{int}(\mathscr C_N)$, we have
\[
\pi_N(T,\lfloor \rho T\rfloor)
\sim c_\rho(T) \frac{e^{\mathfrak h_N(\rho)T}}{T^{1+(b+N)/2}}, 
\]
as $T \to \infty$,
where $c_\rho(T)$ is bounded above and bounded below away from zero.
\end{theorem}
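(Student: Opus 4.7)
The plan is to extend the Fourier-analytic local limit theorem of Babillot--Ledrappier and \cite{Sh93} to incorporate $N$ additional H\"older cocycles recording linking numbers with the removed knots, and then to apply a saddle-point analysis in the character variable. First I would identify $H_1(M_N,\Z)/\mathrm{torsion}\cong\Z^{b+N}$ explicitly: since the $K_i$ are null-homologous, the extra $N$ coordinates are detected by the linking numbers $\lk(\gamma,K_i)$. Fix Seifert surfaces $S_i\subset M$ with $\partial S_i=K_i$ so that $\lk(\gamma,K_i)=\gamma\cdot S_i$ for every $\gamma\in\mathscr P_N$. Build a Bowen--Ratner Markov section for $X^t$ whose boundary is transverse to every $S_i$; this yields a subshift of finite type $(\Sigma,\sigma)$ with H\"older roof function $r$ coding all but finitely many periodic orbits. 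After absorbing the finitely many exceptions into $\{K_1,\ldots,K_N\}$, one obtains a H\"older cocycle $\psi_0:\Sigma\to\Z^b$ recording the ambient homology class together with, for each $i$, a locally constant cocycle $\psi_i:\Sigma\to\Z$ counting signed intersections with $S_i$ during one return. Writing $\Psi=(\psi_0,\psi_1,\ldots,\psi_N)$, any periodic $x\in\Sigma$ of period $n$ coding $\gamma$ satisfies $[\gamma]_N=\Psi^{(n)}(x)$.

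Next, for $(s,\xi)\in\C\times\R^{b+N}$ I would introduce the twisted transfer operator
\[
\L_{s,\xi}g(x)=\sum_{\sigma y=x}e^{-sr(y)+2\pi i\langle\xi,\Psi(y)\rangle}g(y).
\]
Ruelle--Perron--Frobenius theory supplies a real-analytic leading eigenvalue $\lambda(s,\xi)$; the equation $\lambda(s,\xi)=1$ implicitly defines a real-analytic strictly convex function $s(\xi)$, and $\mathfrak h_N$ is taken to be its Legendre dual, which is automatically real-analytic and strictly positive on $\mathrm{int}(\mathscr C_N)$. Fourier inversion on the Pontryagin dual $\T^{b+N}$ writes
\[
\pi_N(T,\lfloor\rho T\rfloor)=\int_{\T^{b+N}}e^{-2\pi i\langle\xi,\lfloor\rho T\rfloor\rangle}\Pi_T(\xi)\,d\xi,
\]
with $\Pi_T(\xi):=\sum_{\gamma\in\mathscr P_N,\,\ell(\gamma)\le T}e^{2\pi i\langle\xi,[\gamma]_N\rangle}$. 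A Mellin-type contour argument applied to the twisted zeta function $\prod_{\gamma\in\mathscr P_N}(1-e^{-s\ell(\gamma)+2\pi i\langle\xi,[\gamma]_N\rangle})^{-1}$, whose dominant singularity lies at $s=s(\xi)$, converts $\Pi_T(\xi)$ into a sharp asymptotic depending analytically on $\xi$ near each lattice point.

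Finally, a saddle-point/Gaussian expansion at the unique $\xi_\rho$ selected by $\rho$ produces the exponential weight $e^{\mathfrak h_N(\rho)T}$ from the pressure equation and the factor $T^{-1-(b+N)/2}$ from the Hessian in $\xi$; summing the oscillatory contributions from the different lattice points gives the bounded, bounded-below prefactor $c_\rho(T)$. The main technical obstacle is to show that the Fourier integral for $\xi$ bounded away from the lattice is negligible, which requires a uniform strict inequality $|\lambda(\mathfrak h_N(\rho),\xi)|<1$, that is, a joint non-arithmeticity / spectral-gap statement for the full cocycle $\Psi$. The $\psi_0$ portion is the classical non-arithmeticity used for ordinary homology counts; the new ingredient is that $\psi_1,\ldots,\psi_N$ are topologically independent from each other and from $\psi_0$ (using that the meridians of the $K_i$ span a rank-$N$ summand of $H_1(M_N,\Z)$) and satisfy a uniform Dolgopyat--Liverani spectral estimate, which is precisely where the \emph{exponential mixing} hypothesis enters. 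Combining this off-lattice decay with the on-lattice saddle-point main term yields the stated asymptotic.
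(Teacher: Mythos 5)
Your proposal follows the same broad architecture as the paper---symbolic coding, a vector-valued cocycle capturing homology and linking, thermodynamic pressure and Legendre duality to produce $\mathfrak h_N$, and a Fourier/saddle-point local limit argument---and in that sense is the right framework. The paper streamlines the analytic part by invoking Babillot--Ledrappier's Theorem 1.2 directly, whereas you re-derive it via twisted transfer operators; that is a reasonable alternative. Your cocycle construction via intersections with Seifert surfaces $S_i$ differs from the paper's, which builds $f:\Sigma(\Gamma)\to\Z^{b+N}$ through the embedded graph $\bar\Gamma\hookrightarrow M_N$ and the surjection $\pi_1(\bar\Gamma)\to\pi_1(M_N)$; both should yield the same data.

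However, there is a genuine gap in your non-arithmeticity argument. To push the contour/saddle-point analysis through you must show that the only character of $\R\times\Z^{b+N}$ trivial on the group generated by $\bigl(\ell(\eta),\int_\eta F\bigr)$ over periodic $\eta$ is the trivial one. You propose to handle the $\Z^{b+N}$ part by ``topological independence'' of the linking cocycles, justified by the fact that the meridians $\mathfrak m_1,\ldots,\mathfrak m_N$ span a rank-$N$ summand of $H_1(M_N,\Z)$. This does not suffice: linear independence of the meridians in $H_1(M_N,\Z)$ says nothing, a priori, about what values the homology classes $[\gamma]_N$ of \emph{periodic orbits} can take. What is actually required is that $\{[\gamma]_N:\gamma\in\mathscr P_N\}$ generates $H_1(M_N,\Z)/\mathrm{torsion}\cong\Z^{b+N}$, and this is a dynamical fact, not a purely topological one: the paper proves it (Proposition \ref{prop:periodic_orbits_generate_homology}) precisely by invoking McMullen's Chebotarev law for the Anosov flow, applied to the finite quotient that would exist if the orbits failed to generate. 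Without some argument of this kind, you cannot rule out, for instance, that every linking number $\lk(\gamma,K_1)$ is even, which would leave a nontrivial character unkilled and destroy the off-lattice estimate. A secondary, less serious point: you invoke Dolgopyat--Liverani spectral estimates from exponential mixing to control the off-lattice Fourier integral, but for the leading-order asymptotic the weaker ``not locally constant'' property (which is what the exponential mixing hypothesis is actually used for in the paper, via Field--Melbourne--T\"or\"ok) already suffices to eliminate the $\R$-direction; Dolgopyat-type uniform contraction would only be needed for quantitative error terms.
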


\begin{remark}
In fact, the result holds under a weaker hypothesis than exponential mixing.
See section \ref{sec:anosovflows} below.
\end{remark}

We will now outline the content of the rest of the paper. In section \ref{sec:anosovflows}, we define Anosov flows and introduce the assumptions for our main result. In section \ref{sec:knotcomp}, we discuss the homology of a link complement, when the link is given by finitely many null-homologous periodic orbits in $M$. In particular we show that the first homology group is generated by the homology classes of the remaining periodic orbits. In section \ref{sec:symbolic_dynamics}, we see how the homology of orbits is encoded by the modelling of our Anosov flow as a symbolic suspension flow. In section \ref{therm_form}, we describe some relevant tools from the thermodynamic formalism for suspension flows. In section \ref{sec:counting} we state our main results and show how they follow from the main result of \cite{bab-led}.

\section{Anosov flows}\label{sec:anosovflows}
Let $M$ be a smooth, connected, oriented, closed (compact and without boundary) Riemannian 3-manifold  and let $X^t:M\to M$ be a $C^1$ flow generated by the vector field $X$.
We will assume $X^t$ is Anosov, meaning that the tangent bundle has a continuous $DX^t$-invariant splitting $TM = E^0 \oplus E^s \oplus E^u$,
where $E^0$ is the one-dimensional bundle spanned by $X$ and where there exist constants $C,\lambda >0$ such that
\begin{enumerate}
\item
$\|DX^tv\| \le Ce^{-\lambda t} \|v\|$, for all $v \in E^s$ and $t >0$;
\item
$\|DX^{-t}v\| \le Ce^{-\lambda t} \|v\|$, for all $v \in E^u$ and $t>0$.
\end{enumerate}
This class of flows was introduced by Anosov \cite{anosov}; for a good modern reference
see \cite{fisher}.
In addition, we assume that $X^t : M \to M$ is topologically transitive, i.e. that there is a dense orbit. 

We say that the flow is {\it topologically weak-mixing} if the equation 
$\psi \circ X^t = e^{iat} \psi$, for $\psi\in C(M,\C)$ and $a \in \mathbb R$, has only the trivial solution where $\psi$ is constant and $a=0$. We say it is {\it topologically mixing} if for all
non-empty open sets $U,V \subset M$, there exists $t_0$ such that
$X^t(U) \cap V \ne \varnothing$ for all $t \ge t_0$.
For Anosov flows (and more general hyperbolic flows) these conditions are equivalent
and are also equivalent to the flow being mixing with respect to the equilibrium state $\mu$ of any
H\"older continuous function, i.e. that
\[
\rho_{F,G}(t) := 
\int F \circ X^t \, G \, d\mu - \int F \, d\mu \int G \, d\mu \to 0,
\]
as $t \to \infty$, for all $F,G \in L^2(\mu)$.
A classical result of Plante \cite{plante} is that a transitive Anosov flow fails to be topologically weak-mixing if and 
only if it is the constant time suspension of an Anosov diffeomorphism.

An Anosov flow has a countably infinite set $\mathscr P$ of prime periodic orbits. 
For $\gamma \in \mathscr P$, $\ell(\gamma)$ denotes the least period of $\gamma$.
We have that the flow fails to be mixing if and only if
$\{\ell(\gamma) \hbox{ : } \gamma \in \mathcal P\}$ is contained in a discrete subgroup of $\mathbb R$.
If $X^t : M \to M$ is mixing then $|\{\gamma \in \mathscr P \hbox{ : } \ell(\gamma) \le T\}|
\sim e^{hT}/hT$, where $h$ is the topological entropy \cite{margulis1},\cite{margulis2},\cite{PP83}.

A transitive Anosov flow is 
(bounded-to-one) semiconjugate to a suspension flow over a subshift of finite type.
(This will be discussed in greater detail in section \ref{sec:symbolic_dynamics}.)
We shall consider flows such that the roof function cannot be chosen to be locally constant;
we call such flows {\it not locally constant}.
This is an open dense condition which is implied by the flow having {\it good asymptotics} in the sense defined in \cite{FMT}.
(We omit the rather complicated definition.)
In particular, this holds if the flow is exponentially mixing, i.e. that $\rho_{F,G}(t)$ converges to zero exponentially fast for the equilibrium state of any H\"older continuous function and all
sufficiently regular $F$ and $G$.
The class of exponentially mixing transitive Anosov flows includes
geodesic flows over compact surfaces of (not necessarily constant)
negative curvature \cite{Dol1}. 
In fact, Tsujii and Zhang have shown that any {\it smooth} mixing Anosov flow on a closed $3$-manifold is
exponentially mixing \cite{Tsujii-Zhang}.

\section{Knot complements}\label{sec:knotcomp}

We now wish to consider $M$ with a finite number of null-homologous periodic orbits removed. 
(For convenience, we suppose that they are integrally null-homologous, so
we end up with integral linking numbers.)
Let $K_1,\ldots,K_N$ be null-homologous knots in $M$ (for the moment, they do not need to be 
periodic orbits).
We need to understand the homology of the complement $M_N=M\setminus \bigcup_{i=1}^N K_i$.
We will denote the homology class of $\gamma$ in $H_1(M_N,\R)$ by
$[\gamma]_N$. 

 For each $i=1,\ldots,N$, replace $K_i$ with
a tubular neighbourhood
$\mathscr N(K_i)$ (an embedded solid torus), and (slightly abusing notation) let
\[
M_N := M \setminus \bigcup_{i=1}^N \mathrm{int}(\mathscr N(K_i)).
\]
For each $i=1,\ldots,N$, let $\mathfrak m_i$ be a meridian in $\partial \mathscr N(K_i)$.

\begin{lemma}
Let $K_1,\ldots,K_N$ be null-homologous knots in $M$. Then
\[
H_1(M_N,\R) \cong H_1(M,\R) \oplus \R^N.
\]
In particular, the first Betti number of $M_N$ is $b+N$.
Furthermore, the classes $[\mathfrak m_1]_N,\ldots,[\mathfrak m_N]_N$ are linearly independent in
$H_1(M_N,\R)$.
\end{lemma}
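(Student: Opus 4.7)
My plan is to use the long exact sequence of the pair $(M, M_N)$, combined with excision and Poincar\'e duality. The null-homologous hypothesis will enter in precisely one place: to force the connecting map $H_2(M) \to H_2(M, M_N)$ to vanish.

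First I would compute $H_*(M, M_N; \R)$. Since $M \setminus \bigcup_i K_i$ deformation retracts onto $M_N$, one has $H_*(M, M_N) \cong H_*(M, M \setminus \bigcup_i K_i)$, and excising $M \setminus \bigcup_i \mathscr N(K_i)$ reduces this to
\[
H_*(M, M_N; \R) \cong \bigoplus_{i=1}^N H_*(\mathscr N(K_i), \partial \mathscr N(K_i); \R).
\]
Each summand is the homology of the pair $(D^2 \times S^1, T^2)$, and a direct calculation (via the long exact sequence of that pair) gives $H_1 = 0$ and $H_2 \cong \R$, with the $H_2$ generator represented by a meridian disk whose boundary is the meridian $\mathfrak m_i$.

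Next I would analyse the map $H_2(M; \R) \to H_2(M, M_N; \R) \cong \R^N$. Its $i$-th coordinate sends a $2$-cycle $\sigma$ in $M$ to the signed intersection number $\sigma \cdot K_i$, i.e.\ to the evaluation of the Poincar\'e dual of $[K_i]$ on $\sigma$. Since $M$ is a closed oriented $3$-manifold and each $K_i$ is null-homologous, this pairing vanishes identically, so the connecting map is zero. Combined with $H_1(M, M_N; \R) = 0$ from the previous step, the long exact sequence collapses to
\[
0 \to H_2(M, M_N; \R) \xrightarrow{\partial} H_1(M_N; \R) \to H_1(M; \R) \to 0,
\]
and by naturality $\partial$ sends the $i$-th generator to $[\mathfrak m_i]_N$. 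Hence $[\mathfrak m_1]_N, \ldots, [\mathfrak m_N]_N$ form a basis for the image of $\partial$ (in particular they are linearly independent), and since the sequence is one of real vector spaces it splits, yielding $H_1(M_N, \R) \cong H_1(M, \R) \oplus \R^N$ and $b_1(M_N) = b + N$.

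The only genuinely non-formal ingredient is the vanishing of $H_2(M) \to H_2(M, M_N)$ via Poincar\'e duality; this is the unique point at which null-homologousness is used, and it is precisely what rules out the degenerate behaviour seen in the non-null-homologous case (for example $K = S^1 \times \{p\} \subset S^1 \times S^2$, whose meridian bounds a disk in the complement).
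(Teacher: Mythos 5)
Your proof is correct and takes a genuinely different route from the one in the paper. You work with the long exact sequence of the pair $(M,M_N)$, use excision to identify $H_*(M,M_N;\R)$ with $\bigoplus_i H_*(\mathscr N(K_i),\partial \mathscr N(K_i);\R)$, and then invoke Poincar\'e duality (via the intersection pairing) to show that $H_2(M;\R)\to H_2(M,M_N;\R)$ vanishes precisely because each $[K_i]=0$ in $H_1(M;\R)$. This gives the short exact sequence $0\to \R^N \xrightarrow{\partial} H_1(M_N;\R)\to H_1(M;\R)\to 0$, from which both the Betti number formula and the linear independence of the meridians drop out simultaneously, since $\partial$ sends the $i$-th generator to $[\mathfrak m_i]_N$. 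The paper instead works with the pair $(M_N,\partial M_N)$: the Betti number count is deduced from the ``half lives, half dies'' principle (itself a packaged form of Poincar\'e--Lefschetz duality for the bounded manifold $M_N$), and the linear independence of the meridians is established separately by an explicit geometric argument --- a Seifert-type surface for each longitude is pushed into $M_N$ to express $[\mathfrak l_i]_N$ in terms of the meridian classes, so that the meridians span the $N$-dimensional image of $H_1(\partial M_N;\R)\to H_1(M_N;\R)$. Both arguments rest on duality, but yours has the advantage of isolating the single point at which the null-homologous hypothesis is used and of delivering both conclusions from one exact sequence; the paper's argument keeps the peripheral torus and its meridian/longitude basis in view, which is the structure that matters for the linking-number interpretation later in the paper.
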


\begin{proof} The first statement follows from the 
homology long exact sequence
\begin{align*}
 \longrightarrow H_2(M_N,\partial M_N) \longrightarrow H_1(\partial M_N)
\longrightarrow H_1(M_N) \longrightarrow H_1(M_N,\partial M_N) \longrightarrow 
\end{align*}
and the `half lives, half dies' principle (Lemma 3.5 of \cite{hatcher-notes}),
which tells us that the dimension of the image 
of the boundary homomorphism
$\partial: H_2(M_N,\partial M_N) \to H_1(\partial M_N)$ is equal to $N$.

For the second statement, we are indebted to a Mathematics Stack Exchange post by Kyle Miller.
By construction, the longitude $\mathfrak l_i$ is null-homologous in 
$M\setminus \mathrm{int}(\mathscr N(K_i))$ and so we can choose an embedded
surface $S_i \subset M\setminus \mathrm{int}(\mathscr N(K_i))$ with boundary $\mathfrak l_i$.
We may assume that $S_i$ intersects the other knots transversally and so we obtain an embedded
surface $S_i' = S_i \cap M_N$ that represents $[\mathfrak l_i]$ as a linear combination of the
homology classes of the meridians
$[\mathfrak m_1]_N,\ldots,[\mathfrak m_N]_N$.
Since $[\mathfrak m_1]_N,\ldots,[\mathfrak m_N]_N,[\mathfrak l_1]_N,\ldots,[\mathfrak l_N]_N$
is a basis for $H_1(\partial M_N,\R)$, $[\mathfrak m_1]_N,\ldots,[\mathfrak m_N]_N$ span the image
of the inclusion homomorphism $i_* : H_1(\partial M_N,\R) \to H_1(M_N,\R)$, and
$\dim \ker i_* = N$, it follows that  
$[\mathfrak m_1]_N,\ldots,[\mathfrak m_N]_N$ are linearly independent in $H_1(M_N,\R)$.
\end{proof}

We will need the following result. (The analogous statement for $H_1(N,\Z)$ may be found in \cite{PP86}.)

\begin{proposition}\label{prop:periodic_orbits_generate_homology}
The torsion-free part of the group $H_1(M_N,\mathbb Z)$ is generated
by $\{[\gamma]_N\hbox{ : }\gamma\in \mathscr P_N\}$.
\end{proposition}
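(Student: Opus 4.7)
My plan is to combine the algebraic description of $H_1(M_N,\Z)/\mathrm{torsion}$ coming from the preceding lemma with a Chebotarev-type equidistribution result for Anosov flows, as recorded by McMullen \cite{mcmullen}. By the lemma (together with the long exact sequence of the pair $(M,M_N)$ and excision, which identify the kernel of the inclusion-induced map with the subgroup spanned by meridians), one obtains a short exact sequence on torsion-free homology
\[
0 \longrightarrow \Z\langle [\mathfrak m_1]_N,\ldots,[\mathfrak m_N]_N\rangle \longrightarrow H_1(M_N,\Z)/\mathrm{torsion} \xrightarrow{j_*} H_1(M,\Z)/\mathrm{torsion} \longrightarrow 0.
\]
Writing $H$ for the subgroup of the middle term generated by $\{[\gamma]_N : \gamma\in \mathscr P_N\}$, the goal is to show that $H$ is the whole group.

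First I would verify that $j_*(H) = H_1(M,\Z)/\mathrm{torsion}$. The classical result cited as \cite{PP86} says $\{[\gamma] : \gamma\in \mathscr P\}$ generates the torsion-free part of $H_1(M,\Z)$, and since each $K_i$ is null-homologous in $M$ by hypothesis, removing $K_1,\ldots,K_N$ from $\mathscr P$ does not shrink the subgroup generated. Hence the image of $H$ under $j_*$ is already all of $H_1(M,\Z)/\mathrm{torsion}$.

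The heart of the argument is to promote this to $H = H_1(M_N,\Z)/\mathrm{torsion}$. I would argue by contradiction: if $H$ were a proper subgroup of $\Z^{b+N}$, then $\Z^{b+N}/H$ would be a nontrivial finitely generated abelian group and, by the structure theorem, would admit a surjection to $\Z/p$ for some prime $p$. This supplies a nonzero character $\chi : H_1(M_N,\Z) \to \Z/p$ vanishing on every $[\gamma]_N$ with $\gamma\in \mathscr P_N$. Now $\chi$ corresponds to a regular $\Z/p$-cover of $M_N$, and the Chebotarev law of \cite{mcmullen}, applied to the finite abelian quotient of $\pi_1(M_N)$ determined by $\chi$, guarantees that every element of $\Z/p$ is realised as $\chi([\gamma]_N)$ for a positive density of $\gamma \in \mathscr P_N$; this directly contradicts $\chi|_H = 0$ and forces $H$ to equal all of $H_1(M_N,\Z)/\mathrm{torsion}$.

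The main obstacle I anticipate is the legitimacy of this last appeal to Chebotarev in the $\Z/p$-cover $\widetilde M_N \to M_N$, since $\widetilde M_N$ is non-compact and one needs the lifted flow to retain enough hyperbolic and (weak-)mixing structure for equidistribution in $\Z/p$ to apply. This is precisely the setting treated in \cite{mcmullen}, so the task reduces to checking that our standing hypotheses on $X^t$ match McMullen's; alternatively, one could bypass \cite{mcmullen} entirely by adapting the twisted Ruelle zeta function argument underlying \cite{PP86} directly to $M_N$ and characters of $H_1(M_N,\Z)$, reaching the same contradiction.
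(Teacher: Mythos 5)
Your proposal is correct and its core is identical to the paper's proof: assume the subgroup $H$ generated by $\{[\gamma]_N : \gamma\in\mathscr P_N\}$ is proper, pass to a nontrivial finite abelian quotient of $H_1(M_N,\Z)$ killing $H$ (you take a $\Z/p$ quotient, the paper takes $H_1(M_N,\Z)/H'$ for a proper cofinite $H'\supset H$ --- same idea), and contradict McMullen's Chebotarev law since every $[\gamma]_N$ would map to the identity. The preliminary short exact sequence and the step verifying $j_*(H)=H_1(M,\Z)/\mathrm{torsion}$ via \cite{PP86} are harmless but superfluous, since the contradiction argument alone already handles the whole group; and your closing worry about the applicability of \cite{mcmullen} is unfounded --- the paper's standing hypotheses on $X^t$ are exactly those under which McMullen's theorem holds, so no extra checking or zeta-function bypass is needed.
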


\begin{proof}
This is a corollary of the main result in \cite{mcmullen}, where an analogue of the Chebotarev density theorem is proved for weak-mixing Anosov flows. 
More precisely, for a finite group $G$, and a surjective homomorphism $\phi :\pi_1(M_N)\to G$, given any conjugacy class $C$ in $G$ we have
\begin{equation}\label{chebotarev}
\lim_{T\to\infty}\frac{|\{\gamma\in \mathscr P_N\hbox{ : }\ell(\gamma)\leq T,\, \phi(\gamma)\in C\}|}
{|\{\gamma\in \mathscr P_N\hbox{ : }\ell(\gamma)\leq T\}|}=\frac{|C|}{|G|}.
\end{equation}
If $H_1(M_N,\mathbb Z)/\mathrm{torsion}$ is not generated by the orbits in $\mathscr P_N$ 
then there is some proper cofinite subgroup 
$H\leq H_1(M_N,\mathbb Z),$ such that 
$$\{[\gamma]_N\hbox{ : }\gamma\in \mathscr P_N\}\subset H.
$$ 
If we set $G=H_1(M_N,\mathbb Z)/H$ then we have a quotient homomorphism
$\phi : \pi_1(M_N) \to G$
and the formula (\ref{chebotarev}) is contradicted, since all orbits 
in $\mathscr P_N$ satisfy $\phi(\gamma)=0$.
\end{proof}

\section{Symbolic dynamics}\label{sec:symbolic_dynamics}
In this section, we discuss how to model our flows by symbolic systems, which 
(as we will see in the next section) will also keep track of the homology and linking numbers of periodic orbits.
This is classical material, due independently to Bowen and Ratner. 
We begin by defining subshifts of finite type.

\subsection{Subshifts of finite type and coding}
Let $k\geq 2$, and let $\Gamma$ be a directed graph on $k$ vertices
$V(\Gamma)$, labelled $1,\ldots,k$, and with directed edges $E(\Gamma) \subset V(\Gamma) \times V(\Gamma)$. 
(In particular, for each ordered pair of vertices $(i,j)$ there is at most one directed edge from $i$ to $j$.)
Define $$\Sigma(\Gamma):=\{(x_n)_{n\in\Z}\in V(\Gamma)^{\Z} : (x_n,x_{n+1})\in E(\Gamma)\text{ for all }n\},$$ i.e. the set of bi-infinite walks on $\Gamma.$
The shift map 
$\sigma:\Sigma(\Gamma)\to\Sigma(\Gamma)$ is defined by 
$\sigma((x_{n})_{n\in \Z})=(x_{n+1})_{n\in \Z}$. 

We give $V(\Gamma)$ the discrete topology, $V(\Gamma)^{\mathbb Z}$ the product topology, and 
$\Sigma(\Gamma)$ the subspace topology.
The following metrics are compatible with this topology.
Fix $0<\theta <1$. For $x,y\in X$ set $d_\theta(x,y)=\theta^{n(x,y)}$, where 
\[
n(x,y) =\sup\{n \in \mathbb Z^+ \hbox{ : } x_i=y_i \ \forall |i|<n\}.
\]
With this metric in place, let $F_\theta$ denote the set of real-valued $d_\theta$-Lipschitz functions on 
$\Sigma(\Gamma)$. 

We will say that $\Gamma$ is {\it aperiodic} if there exists $N \ge 1$ such that, for each ordered
pair of vertices $(i,j)$, there is a path of length $N$ from $i$ to $j$. This condition is equivalent 
to $\sigma : \Sigma(\Gamma) \to \Sigma(\Gamma)$ being topologically mixing (i.e. that there
exists $N \ge 1$ such that for all non-empty open sets $U,V \subset \Sigma(\Gamma)$, we have
$\sigma^n(U) \cap V \ne \varnothing$ for all $n \ge N$).

Given a strictly positive function $r\in F_\theta$, we can suspend 
$\Sigma(\Gamma)$ with $r$ as roof function, to obtain the space 
$$\Sigma(\Gamma,r)=
\{(x,t)\in \Sigma(\Gamma)\times \R\hbox{ : }0\leq t\leq r(x)\}/
(x,r(x))\sim (\sigma(x),0)
.$$ 
The suspension flow $\sigma^r_t:\Sigma(\Gamma,r)\rightarrow \Sigma(\Gamma,r)$ is defined by $\sigma^r_t[x,s]=[x,s+t],$ where $[x,s]$ is the equivalence class of $(x,s)$ in $\Sigma(\Gamma,r)$. 
According to the classic results of Bowen and Ratner from the 1970s, Anosovs flows (and more general hyperbolic flows) may be modelled by suspension flows of this type.

\begin{theorem}[Bowen \cite{bowen2}, Ratner \cite{ratner}]\label{symbolic} Let $X^t:M\rightarrow M$ be a topologically weak-mixing transitive Anosov flow. Then there exists a suspension flow $(\sigma^r,\Sigma(\Gamma,r))$,
over a topologically mixing  $(\sigma,\Sigma(\Gamma))$,
and a continuous surjection $\pi:~\Sigma(\Gamma,r)\rightarrow M$ such that
\begin{enumerate}
    \item[\emph{(i)}] For all $t\in \R$, $\pi\circ\sigma^r_t=X^t\circ\pi$;
    \item[\emph{(ii)}] There exists $N\in \N$ such that $\#\pi^{-1}(y)\leq N$ for all $y\in M$;
    \item[\emph{(iii)}] $\sigma^r$ is topologically weak-mixing and transitive.
\end{enumerate}
\end{theorem}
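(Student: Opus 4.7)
The plan is to construct a Markov family of sections transverse to $X^t$ and then read off the subshift, roof function, and semiconjugacy from the first-return map between sections; this is the classical Bowen--Ratner scheme. First I would pick finitely many points $p_1,\ldots,p_k$ and, at each, build a small proto-rectangle $D_i' = [U_i, S_i]$ via the local product structure, where $U_i$ (resp.\ $S_i$) is a small disk in the local strong unstable (resp.\ stable) leaf through $p_i$. After adding enough points and shrinking, I would arrange that every orbit meets $\bigcup_i D_i'$ in uniformly bounded time, so the Poincar\'e return map $P$ is defined off a meagre set with uniformly bounded return time. The delicate step is to refine the $D_i'$ into rectangles $D_1,\ldots,D_k$ satisfying the Markov property --- $P$ carries unstable fibres of $D_i$ across unstable fibres of $D_j$ whenever it maps them into each other, and stable fibres pull back analogously --- by successively subdividing along forward and backward images of stable/unstable boundaries under $P$. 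The resulting transition graph $\Gamma$, with $(i,j) \in E(\Gamma)$ iff $P(\operatorname{int} D_i) \cap \operatorname{int} D_j \ne \varnothing$, codes orbits via the itinerary map sending $(x_n)_{n \in \Z}$ to $\bigcap_n P^{-n}(D_{x_n})$.

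Next I would verify the three conclusions. The first-return time $r$ is H\"older continuous on each $D_i$ (inheriting regularity from the smooth strong stable/unstable foliations) and uniformly bounded above and below, so it pulls back to a function in $F_\theta$ on $\Sigma(\Gamma)$ for some $\theta \in (0,1)$; defining $\pi[x,t] := X^t(\pi_0(x))$, where $\pi_0$ inverts the itinerary coding, then gives (i). The bound (ii) holds because any ambiguity in coding arises only from orbits that hit the finite union of stable/unstable boundaries, and a combinatorial argument yields a uniform multiplicity bound $N$. For (iii), transitivity of $\sigma^r$ is inherited from transitivity of $X^t$ via $\pi$, and the topological mixing of the base shift $\sigma$ (equivalently, aperiodicity of $\Gamma$) is forced by the weak-mixing hypothesis together with Plante's theorem: if $\Gamma$ had period $p > 1$ then the set of return times would lie in a discrete subgroup of $\R$, forcing $X^t$ to be a constant-time suspension of an Anosov diffeomorphism and contradicting weak-mixing.

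The main obstacle is the Markov refinement in the first step: ensuring that finitely many subdivisions suffice to make stable and unstable boundaries map consistently into one another, while simultaneously controlling the multiplicity bound $N$, is the technical heart of Bowen's original construction and relies on careful shadowing and bracket estimates within the local product structure. The other steps (H\"older regularity of $r$, transfer of transitivity, and the mixing dichotomy via Plante) are comparatively routine once the Markov section is in hand.
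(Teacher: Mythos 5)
Note first that the paper does not prove this theorem: it is stated with citations to Bowen and Ratner, followed only by a brief description of how $\pi$ is read off from a Markov section, so there is no in-paper argument to match your sketch against line by line. Your account of building proto-rectangles via the local product structure, subdividing to enforce the Markov property, and deriving (i) and (ii) from the itinerary coding is consistent with the classical Bowen--Ratner scheme at this level of detail.

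The genuine gap is in your argument for (iii), specifically the claim that weak-mixing of $X^t$ forces the transition graph $\Gamma$ to be aperiodic via Plante's theorem. The first implication in your chain --- ``if $\Gamma$ has period $p>1$ then the set of return times lies in a discrete subgroup of $\R$'' --- is false. Periodicity of $\Gamma$ is a combinatorial feature of the chosen Markov family and constrains only the \emph{lengths} of cycles in the graph, not the Birkhoff sums $r^n(x)$ of the non-constant H\"older roof function. Even if every closed cycle of $\Gamma$ has length divisible by $p$, the periods $\{r^{pk}(x):\sigma^{pk}x=x,\ k\ge 1\}$ need not lie in any discrete subgroup, so one cannot conclude that $X^t$ is a constant-time suspension, and Plante's dichotomy is not applicable. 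Indeed, a weak-mixing Anosov flow can perfectly well admit a Markov family whose transition graph is periodic. The correct route runs in the opposite direction: Bowen's construction delivers a \emph{transitive} (not automatically mixing) subshift, and if $\Gamma$ has period $p$ one passes to the cyclic decomposition $V(\Gamma)=V_1\sqcup\cdots\sqcup V_p$, restricts to sequences with $x_0\in V_1$, and replaces $(\sigma,r)$ by $(\sigma^p,r^p)$ recoded as a one-step shift on a new alphabet; this yields a topologically conjugate suspension over a mixing base. Weak-mixing of $\sigma^r$ itself is then verified separately, using the correspondence (up to finitely many non-uniquely coded orbits in dimension three) between periodic orbits of $\sigma^r$ and of $X^t$, together with the characterisation that topological weak-mixing is equivalent to the set of least periods generating a dense subgroup of $\R$. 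Relatedly, transitivity of $\sigma^r$ is not ``inherited via $\pi$'' --- a factor map transfers dynamical properties forward, not backward; it follows instead because a dense orbit of $X^t$ has an itinerary visiting every cylinder, which forces transitivity of the coded shift.
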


The map $\pi$ is constructed as follows. Using the structure of the stable and unstable manifolds of $X$, one can construct a Markov section. In particular, this gives a finite set
$\{R_1,\ldots R_k\}$ of codimension one cross-sections to the flow (or rectangles), such that every orbit of $X^t$ intersects $R=\bigcup_{i=1}^kR_i$ infinitely often in the past and future, and 
these intersections are transversal. 
Letting $\tau:R\to R$ be the first return map to $R$ (or Poincar\'e map), define a directed graph $\Gamma$ on $k$ vertices $\{1,2,\ldots, k\}$ as follows. First, denote by $ij$ the directed edge from vertex $i$ to vertex $j$. Include $ij$ in the directed edge set $E(\Gamma)$ whenever $R_i\cap \tau^{-1}(R_j)\neq \varnothing$. This yields a connected directed graph, with at most two edges (of opposite direction) between two vertices. 
The rectangles are chosen with the properties that there exists $T>0$ such that $M\subset X^{[0,T]}R$, and that , for each $x\in \Sigma(\Gamma)$, $\bigcap_{n\in\Z}\tau^n(R_{x_n})$ contains exactly one point. With this, we can define $\pi':\Sigma(\Gamma)\to R$ by $\pi'(x)\in \bigcap_{n\in\Z}\tau^n(R_{x_n}).$ Setting $r:R\to \R^+$ to be the first return time to $R$, we can extend $\pi'$ to a map $\pi:\Sigma(\Gamma,r)\to M$, by $\pi[x,s]=X^s(\pi'(x)).$

\subsection{Homology of periodic orbits via symbolic dynamics}

We need to understand how periodic orbits for $X^t$
are encoded by $\pi$ and how homology and linking may be captured by the symbolic model.

Given $1\leq i,j\leq k$, let $E_{ij}$ be the set of points on flow lines going from 
$\text{int}(R_i)$ to $\text{int}(R_j),$ and let 
$$
U=\left(\bigcup_{i=1}^k\text{int}(R_i)\right)\cup\left(\bigcup_{1\leq i,j\leq k}E_{ij}\right).
$$

Let $\mathscr P_{\sigma^r}$ denote the set of prime periodic orbits for $\sigma^r$.
Given a periodic orbit $\gamma \in \mathscr P$, there is a periodic orbit 
$\eta \in \mathscr P_{\sigma^r}$, with $\ell(\gamma)=\ell(\eta)$, such that $\gamma=\pi(\eta)$. Furthermore, $\eta$ is unique as long as $\gamma\subset U$, i.e. $\gamma$ does not pass through the edge of some rectangle. In the particular case of a 3-manifold, uniqueness holds except for a finite number of orbits.
(In higher dimensions, there many be infinitely many non-uniquely coded orbits 
but they exhibit a slower rate of exponential growth.)
Thus $\pi$ induces a one-to-one correspondence for periodic orbits in $\mathscr P$ and $\mathscr P_{\sigma^r}$, up to removing finitely many.

An additional feature of the symbolic dynamics is that,
as argued in \cite{mcmullen}, we can arrange for 
the removed periodic orbits $K_1,\ldots,K_N$  to sit in the boundary of the sections.
Indeed, suppose that, for some $1\le n \le N$, $x\in K_n \cap \hbox{int}(R_i)$ (by transversality, there can only be finitely many such $x$). 
Then we may split $R_i$ into two pieces $R_{i,1}$ and $R_{i,2}$ at the point $x$,
so that
\[
\{R_1,\ldots, R_{i-1},R_{i,1},R_{i,2},R_{i+1},\ldots,R_k\}
\] 
still forms a Markov section with the same properties as above. With this in mind, we will henceforth assume that $K_1,\ldots,K_N$ are in the boundary of $R$.

Let $\Bar{\Gamma}$ be the graph obtained by removing the direction from edges of $\Gamma$, but retaining any multiple edges between vertices. 
(So if $ij$ and $ji$ are both directed edges then $\Bar \Gamma$ has two undirected edges between $i$ and $j$.)
One can then choose a natural embedding $\iota:\Bar{\Gamma}\hookrightarrow U\subset M_N$. 
The following result appears as Lemma 5.1 of \cite{mcmullen}.

\begin{proposition}\label{prop:surjection} The embedding $\iota:\bGamma\to U$ induces a surjective homomorphism $\iota_*:\pi_1(\Bar{\Gamma})\to\pi_1(M_N).$ \end{proposition}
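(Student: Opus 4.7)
The plan is to identify $\iota(\bGamma)$ as a deformation retract of $U$, and then to homotope every loop of $M_N$ into $U$.

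For the first step, I would argue that each open rectangle $\mathrm{int}(R_i)$ is a $2$-disk that retracts to the vertex $\iota(i)$, and each flow tube $E_{ij}$ is homeomorphic via the flow to a product of a sub-rectangle of $\mathrm{int}(R_i)$ with an interval, hence contractible, and retracts onto the embedded edge $\iota(ij)$. These local retractions are compatible on overlaps and glue to give a deformation retraction $U \to \iota(\bGamma)$, so that $\iota_*$ factors as an isomorphism $\pi_1(\bGamma) \xrightarrow{\sim} \pi_1(U)$ followed by the homomorphism $\pi_1(U) \to \pi_1(M_N)$ induced by the inclusion. It therefore suffices to show this last map is surjective.

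For the second step, take any loop $\alpha$ in $M_N$ based at $\iota(v_0) \in U$. Using general position (and the fact that the knots $K_1,\ldots,K_N \subset \partial R$ have already been deleted from $M$), I would homotope $\alpha$ to be transverse to the $2$-dimensional set $R = \bigcup_i R_i$, meeting it at finitely many points lying in the interiors of the rectangles, and further transverse to the flow-saturated surfaces $\bigcup_{t \in \R} X^t(\partial R)$ coming from the boundaries of the rectangles. The loop $\alpha$ is then partitioned into finitely many sub-arcs, each contained in a single closed flow tube $\overline{E_{ij}}$, which is topologically a $3$-ball. Each sub-arc is thus homotopic rel endpoints to a path running through the corresponding edge $\iota(ij)$ of $\iota(\bGamma)$, and concatenating these homotopies, together with a small homotopy within each $\mathrm{int}(R_{i_k})$ near a crossing, deforms $\alpha$ into a loop in $\iota(\bGamma) \subset U$.

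The main obstacle is controlling these transversality perturbations: the flow-saturation of $\partial R$ is of real codimension $1$ in the $3$-manifold $M_N$, so it cannot be avoided by a generic perturbation and $\alpha$ genuinely traverses many flow tubes. The hard part is arranging simultaneous transversality with all the strata of $R \cup \bigcup_{t \in \R} X^t(\partial R)$ coming from the Markov partition, verifying that the local homotopies inside each $\overline{E_{ij}}$ glue coherently into a global homotopy, and keeping $\alpha$ disjoint from the removed knots $K_1, \ldots, K_N$ throughout.
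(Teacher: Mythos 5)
The paper does not prove this proposition at all: immediately before the statement it says ``The following result appears as Lemma 5.1 of \cite{mcmullen}'' and defers to McMullen. So you are not deviating from a proof in the paper so much as attempting to supply one the paper omits. Your Step 1 (that $U$ deformation retracts onto $\iota(\bGamma)$, with rectangles retracting to vertices and flow tubes to edges) is essentially correct, and the reduction to showing $\pi_1(U)\to\pi_1(M_N)$ surjective is the right first move.

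The gap is in Step 2, and it is not merely the transversality bookkeeping you flag at the end. You correctly note that the flow-saturation $W=\bigcup_t X^t(\partial R)$ is codimension one and therefore cannot be avoided by perturbation, so you propose to cut $\alpha$ at its crossings of $R\cup W$ and push each sub-arc onto the corresponding edge of $\iota(\bGamma)$. But ``homotopic rel endpoints to a path running through the edge $\iota(ij)$'' cannot be literally rel endpoints, since the sub-arcs' endpoints lie on $\partial \overline{E_{ij}}$, not on the graph; so you must also homotope the endpoints, coherently for consecutive sub-arcs. For a crossing through an interior of a rectangle $R_j$ this is fine (both adjacent edges share the vertex $\iota(j)$). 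For a crossing through a wall face of $W$ it is not: the two flow tubes $\overline{E_{ij}}$ and $\overline{E_{i'j'}}$ on either side of a wall generally have no vertex in common in $\bGamma$, so after pushing each sub-arc onto its edge, the endpoints do not meet at a vertex and the concatenation does not land in $\iota(\bGamma)$. Nothing in your outline explains how to resolve or cancel these wall-crossings, and indeed this is precisely where the structure of $\partial R$ (the Markov property of the stable/unstable boundary, and the fact that the removed orbits $K_1,\dots,K_N$ were deliberately placed into $\partial R$) has to enter; your argument never uses either fact, which is a sign that something essential is missing. As written, Step 2 does not go through, and the proposal should either fill in how wall crossings are eliminated or simply invoke Lemma 5.1 of \cite{mcmullen} as the paper does.
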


We now describe how the homology of periodic orbits can be encoded using the symbolic dynamics.
For each $i\in\{1,\ldots, k\}$, fix a path $p(1,i)$ from $1$ to $i$ in $\Bar{\Gamma}$, 
noting that paths in $\Bar{\Gamma}$ need not follow the directions of edges of $\Gamma$. 
That such paths exist for all $i$ is a consequence of transitivity of $\sigma.$ Let $p(i,1)$ be the path from $i$ to $1$ obtained by following $p(1,i)$ backwards. For any vertex $j$ which satisfies that $ij\in E(\Gamma)$, let $e(i,j)$ be the corresponding edge between $i,j$ in $\bGamma$. 
Then we can form a loop $K(i,j)$
in $\bGamma$ given by 
$$
K(i,j)=1\xrightarrow{p(1,i)}i\xrightarrow{e(i,j)}j\xrightarrow{p(j,1)}1,
$$ 
which has a homotopy class $\langle K(i,j) \rangle \in \pi_1(\bGamma).$  
From Proposition \ref{prop:surjection}, we have a surjective homomorphism
$
\iota_*:\pi_1(\bGamma)\to\pi_1(M_N).
$
We also have the Hurewicz homomorphism
$$
q:\pi_1(M_N)\to H_1(M_N,\Z)
$$ 
and the projection homomorphism 
$$
\mathrm{proj}:H_1(M_N,\Z)\to H_1(M_N,\Z)/\mathrm{torsion} \cong \Z^{b+N}.
$$ 
Let $\rho=\mathrm{proj}\circ q\circ \iota_*:\pi_1(\bGamma)\to \Z^{b+N},$ 
and define $f:\Sigma(\Gamma)\to\Z^{b+N}$ by 
\begin{equation}\label{hom-function}
f(x)=\rho([K(x_0,x_1)]).
\end{equation}
Clearly, $f$ is locally constant. Further, as $\rho$ is a homomorphism, 
we have that for a periodic point $x=\overline{x_0x_1\ldots x_{n-1}},$ the Birkhoff sum 
\begin{align*}
f^n(x) :&=f(x)+f(\sigma x)+\ldots +f(\sigma^{n-1}x)\\
&=\rho([K(x_0,x_1)]_N [K(x_1,x_2)]_N \cdots[K(x_{n-2},x_{n-1})]_N [K(x_{n-1},x_0)]_N)\\
&=\rho([c(x_0,x_1,\ldots, x_{n-1},x_0)]_N), 
\end{align*}
where $c(x_0,x_1,\ldots, x_{n-1},x_0)$ is the cycle $$x_0\xrightarrow{e(x_0,x_1)}x_1\xrightarrow{e(x_1,x_2)}x_2\to\ldots\to x_{n-2}\xrightarrow{e(x_{n-2},x_{n-1})}x_{n-1}\xrightarrow{e(x_{n-1},x_0)}x_0.$$  This construction leads to the following.

\begin{lemma}
Let $\eta\in \mathscr P_{\sigma^r},$ and $x\in \Sigma(\Gamma)$ the corresponding periodic point for $\sigma$, with period $n\in \N.$ Then $\ell(\pi(\eta))=\ell(\eta)=r^n(x),$ and $f^n(x)=[\pi(\eta)]_N$.
\end{lemma}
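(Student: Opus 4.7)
The plan is to address the length equalities and the homology identity separately.

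The length equalities are essentially built into the symbolic setup. By definition of the suspension flow, the $\sigma^r_t$-orbit of $[x,0]$ traverses the fibres over $x,\sigma x,\ldots,\sigma^{n-1}x$ in turn and first returns at time $\sum_{i=0}^{n-1}r(\sigma^i x) = r^n(x)$, giving $\ell(\eta) = r^n(x)$. The equality $\ell(\pi(\eta)) = \ell(\eta)$ will follow from the semiconjugacy $\pi \circ \sigma^r_t = X^t \circ \pi$ of Theorem~\ref{symbolic}(i), together with the arrangement $K_1,\ldots,K_N \subset \partial R$: since $\eta$ is not one of the finitely many symbolic orbits projecting onto the $K_i$, it is a unique-coding orbit, so $\pi|_\eta$ is a homeomorphism onto $\pi(\eta)$ and preserves the least period.

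For the homological equality, I would start from the telescoping computation already written out in the paper just before the lemma:
\[
f^n(x) = \rho\big([c(x_0,x_1,\ldots,x_{n-1},x_0)]\big).
\]
This holds because $p(x_i,1)p(1,x_i)$ is null-homotopic in the graph $\bGamma$, so the internal vertices in the concatenation $K(x_0,x_1)\cdots K(x_{n-1},x_0)$ telescope to leave a $p(1,x_0)$-conjugate of $[c]$ in $\pi_1(\bGamma,1)$; the conjugation is killed by the abelianisation $q$, so $\rho$ sees only $[c]$. Since $\rho = \mathrm{proj}\circ q\circ \iota_*$, the right-hand side equals $[\iota(c)]_N$, the torsion-free class in $H_1(M_N,\Z)$ of the loop $\iota(c)\subset U\subset M_N$. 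It therefore suffices to show that $\pi(\eta)$ and $\iota(c)$ are freely homotopic in $M_N$, since free homotopy implies equality in $H_1$.

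The main obstacle, and the geometric heart of the argument, is constructing this free homotopy; the plan is to match the two loops arc by arc. Write $y_i := \pi(\eta) \cap \mathrm{int}(R_{x_i})$ and let $\alpha_i$ be the flow segment of $\pi(\eta)$ from $y_i$ to $y_{i+1}$ (indices modulo $n$); the corresponding edge $\beta_i := \iota(e(x_i,x_{i+1}))$ runs from $\iota(x_i)$ to $\iota(x_{i+1})$ through the same transition region $E_{x_i,x_{i+1}}$. Both $\alpha_i$ and $\beta_i$ are contained in the flow box $\mathrm{int}(R_{x_i})\cup E_{x_i,x_{i+1}}\cup \mathrm{int}(R_{x_{i+1}})$, which is homeomorphic to the product of a $2$-disk (the Markov cross-section) with an interval (the flow direction) and is therefore simply connected; moreover, because $K_1,\ldots,K_N\subset \partial R$, this box sits entirely inside $M_N$. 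Picking arcs $\delta_i\subset \mathrm{int}(R_{x_i})$ from $y_i$ to $\iota(x_i)$, each composite loop $\alpha_i\cdot \delta_{i+1}^{-1}\cdot \beta_i^{-1}\cdot \delta_i$ is null-homotopic in $M_N$, and stringing these null-homotopies together cyclically (the connectors $\delta_i$ cancel between consecutive pieces) yields a free homotopy between $\pi(\eta)$ and $\iota(c) = \beta_0\beta_1\cdots \beta_{n-1}$ in $M_N$. The subtlety to verify carefully is precisely the simple-connectedness of the flow boxes inside $M_N$, which rests on the Markov rectangles being topological disks and on the placement $K_1,\ldots,K_N \subset \partial R$.
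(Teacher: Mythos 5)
The paper states this lemma without a written proof, treating it as essentially immediate from the telescoping computation preceding it and from the way the embedding $\iota:\bGamma\hookrightarrow U$ is constructed; your write-up supplies the geometric step (the free homotopy between $\pi(\eta)$ and the graph cycle $\iota(c)$ inside $M_N$) that the paper leaves implicit, and it is correct in substance. A few small remarks. First, a sign typo: for consecutive arcs to chain properly, the composite loop should read $\alpha_i\cdot\delta_{i+1}\cdot\beta_i^{-1}\cdot\delta_i^{-1}$, not $\alpha_i\cdot\delta_{i+1}^{-1}\cdot\beta_i^{-1}\cdot\delta_i$. Second, being absent from $\{K_1,\ldots,K_N\}$ does not by itself guarantee that $\gamma=\pi(\eta)$ avoids $\partial R$; there may be other, finitely many, orbits meeting the boundary of the Markov section, so the unique-coding claim should be qualified (this does not hurt the counting application, where discarding finitely many orbits is harmless, but the lemma as stated is silently restricted to orbits contained in $U$). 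Third, a cleaner packaging of your arc-by-arc argument is available: the open set $U$ deformation retracts onto the embedded graph $\iota(\bGamma)$ -- this is exactly what underlies the surjectivity of $\iota_*$ in McMullen's Lemma 5.1, quoted here as Proposition \ref{prop:surjection} -- and the retraction sends the flow arc of $\pi(\eta)$ from $R_{x_i}$ to $R_{x_{i+1}}$ onto the edge $\iota(e(x_i,x_{i+1}))$. For a uniquely coded $\eta$ one has $\pi(\eta)\subset U$, so the retraction immediately produces the free homotopy from $\pi(\eta)$ to $\iota(c)$ without having to verify simple connectedness of each flow box, which is the point you correctly flagged as the delicate part of your version.
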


\section{Thermodynamic formalism for suspension flows}\label{therm_form}

To formulate and prove our results, we will need to use the machinery of thermodynamic formalism.
Since we need to deal with linking numbers and homology of the knot complement $M_N$, we will not be able to use thermodynamic functions defined with respect to the Anosov flow $X^t :M \to M$. Rather, we will work with the shift $\sigma : \Sigma(\Gamma) \to \Sigma(\Gamma)$ and suspension
flow $\sigma_t^r : \Sigma(\Gamma,r) \to \Sigma(\Gamma,r)$.

Let $\mathscr M_\sigma$ denote the space of $\sigma$-invariant Borel probability
measures on $\Sigma(\Gamma)$.
For a continuous function $\varphi : \Sigma(\Gamma) \to \R$, we define its
{\it pressure} (with respect to $\sigma$) $P_\sigma(\varphi)$ by
\[
P_\sigma(\varphi) = \sup_{m \in \mathscr M_\sigma} \left\{h_m(\sigma) + \int \varphi \, dm\right\},
\]
where $h_m(\sigma)$ is the measure theoretic entropy. If $\varphi$ is H\"older continuous then 
the supremum is attained at a unique measure $m_\varphi$, which is called the equilibrium state for
$\varphi$; this measure is ergodic and fully supported.

Now let $\mathscr M_{\sigma^r}$ denote the space of $\sigma^r$-invariant Borel probability
measures on $\Sigma(\Gamma,r)$. There is a natural bijection between $\mathscr M_{\sigma}$
and $\mathscr M_{\sigma^r}$ given by $m \mapsto \widetilde m$, where $\widetilde m$ is given locally by
\[
\frac{m \times \mathrm{Leb}}{\int r \, dm}.
\]
For a continuous function $\Phi : \Sigma(\Gamma,r) \to \R$, we define its pressure (with respect
to $\sigma^r$) $P_{\sigma^r}(\Phi)$ by
\begin{equation}\label{def:pressure_suspension_flow}
P_{\sigma^r}(\Phi) = \sup_{\mu \in \mathscr M_{\sigma^r}} \left\{h_\mu(\sigma^r) + \int \Phi \, d\mu\right\}.
\end{equation}
One can put a metric on $\Sigma(\Gamma,r)$ 
\cite{bowen-walters}
and consider H\"older continuous functions with respect to this metric, but for our purposes it is 
sufficient to consider functions satisfying the following (strictly weaker) condition. We say that
$\Phi$ is {\it
fibre-H\"older} if $\varphi : \Sigma(\Gamma) \to \R$, defined by
\begin{equation}\label{fibre-integral}
\varphi(x) = \int_0^{r(x)} \Phi(x,t) \, dt,
\end{equation}
is H\"older continuous. If $\Phi$ is fibre-H\"older then the supremum in 
(\ref{def:pressure_suspension_flow}) is attained at a unique measure $\mu_\Phi$, which is called the equilibrium state for
$\Phi$; this measure is ergodic and fully supported.
Furthermore, $\mu_\Phi$ is given locally by the product
\[
\frac{m_{-P(\Phi)r+\varphi} \times \mathrm{Leb}}{\int r \, dm_{-P(\Phi)r+\varphi}},
\]
where $\varphi$ is the function in (\ref{fibre-integral}).

It will also be convenient to pass from functions on
$\Sigma(\Gamma)$ to functions on $\Sigma(\Gamma,r)$, so we `lift' 
functions in the following way. 
Choose a smooth function $\kappa : [0,1] \to \R^+$ satisfying
$\kappa(0)=\kappa(1) =0$ and $\int_0^1 \kappa(t) \, dt=1$.
Let $\varphi:\Sigma(\Gamma)\to \R$ be H\"older
continuous and define $\widetilde{\varphi}:\Sigma(\Gamma,r)\to \R$ by 
$$
\widetilde{\varphi}(x,t)=\frac{\varphi(x)}{r(x)} \kappa\left(\frac{t}{r(x)}\right).
$$
Then
\[
\varphi(x)=\int_{0}^{r(x)}\widetilde{\varphi}(x,t)\, dt
\]
and $\widetilde \varphi$ is automatically fibre-H\"older. 
Furthermore, for a periodic orbit $\eta$ of $\sigma^r$, corresponding to a point $x\in \Sigma(\Gamma)$ of least period $n$, 
$$\frac{\varphi^n(x)}{r^n(x)}=\frac{1}{\ell(\eta)}\int_\eta\widetilde{\varphi},$$
where 
$\int_\eta\tilde{\varphi}=\int_0^{\ell(\eta)}\tilde{\varphi}(\sigma^r_tx_\eta)\,dt$, for $x_\eta\in \eta.$ This integral defines a probability measure, $\mu_\eta$, by 
$$\int \Phi \, d\mu_\eta =\frac{1}{\ell(\eta)}\int_\eta\Phi.$$

We now wish to consider pressure functions of a particular form. Fix a fibre-H\"older function
$F : \Sigma(\Gamma,r) \to \R^d$ (for some $d \ge 1$) and let $f : \Sigma(\Gamma) \to \R^d$ be defined by the higher dimensional analogue of (\ref{fibre-integral}), i.e.
\[
f(x) = \int_0^{r(x)} F(x,t) \, dt.
\]
We then have a set $\mathscr C_F \subset \R^d$ defined in the following equivalent ways:
\begin{align*}
\mathscr C_F &= \overline{\left\{\frac{1}{\ell(\eta)} \int_\eta  F \hbox{ : } \eta \in \mathscr P_{\sigma^r}
\right\}}
= \left\{\int F d\mu \hbox{ : } \mu \in \mathscr M_{\sigma^r} \right\}
\\
&= \left\{
\frac{\int f \, dm}{\int r \, dm} \hbox{ : } m \in \mathscr M_\sigma\right\}.
\end{align*}

We write $F = (F_1,\ldots,F_d)$ and, for $u = (u_1,\ldots,u_d) \in \R^d$, write
$\langle u,F \rangle = u_1F_1 + \cdots + u_dF_d$.
We then define the pressure function
$\mathfrak p_F : \R^d \to \R$  by
\[
\mathfrak p_F(u) = P_{\sigma^r}(\langle u,F \rangle).
\]
This is real analytic and convex and satisfies
\[
\nabla \mathfrak p_F(u) = \int F \, d\mu_{\langle u,F \rangle}.
\]
We have the following standard result (see \cite{lalley87}, for example).

\begin{lemma}\label{lem:strictly_convex}
The following are equivalent.
\begin{enumerate}
\item
For all $u \in \R^d\setminus \{0\}$, the function $\langle u,F\rangle : \Sigma(\Gamma,r) \to \R$ is 
not cohomologous to a constant.
\item
The pressure function $\mathfrak p_F$ is strictly convex.
\item
$\mathrm{int}(\mathscr C_F) \ne \varnothing$.
\end{enumerate}
\end{lemma}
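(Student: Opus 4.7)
The plan is to establish the cycle of implications (i) $\Rightarrow$ (ii) $\Rightarrow$ (iii) $\Rightarrow$ (i). The central tool is the identity $\nabla \mathfrak{p}_F(u) = \int F \, d\mu_{\langle u,F\rangle}$ already noted in the text, together with the standard second-derivative formula for pressure in terms of asymptotic variance.

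For (i) $\Rightarrow$ (ii), I fix $v \in \R^d \setminus \{0\}$ and consider $s \mapsto \mathfrak{p}_F(u+sv)$. Its second derivative at $s=0$ equals the asymptotic variance of $\langle v,F\rangle$ with respect to $\mu_{\langle u,F\rangle}$. To invoke the classical theory (stated most cleanly for subshifts of finite type), I would pass to the fibre-integral $f:\Sigma(\Gamma)\to\R^d$ of $F$ and use the correspondence $m \leftrightarrow \widetilde m$ between $\mathscr{M}_\sigma$ and $\mathscr{M}_{\sigma^r}$; the vanishing of the asymptotic variance of $\langle v,F\rangle$ under $\sigma^r$ translates into the vanishing of the $\sigma$-variance of a suitable centered combination of $\langle v, f\rangle$ and $r$. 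By the Livšic/Parry--Pollicott theory (see e.g.\ \cite{lalley87}), this variance vanishes if and only if $\langle v,F\rangle$ is $\sigma^r$-cohomologous to a constant. Under (i), no such $v$ exists, so $\nabla^2\mathfrak{p}_F(u)$ is positive definite for every $u$, yielding strict convexity.

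For (ii) $\Rightarrow$ (iii), strict convexity of the real-analytic function $\mathfrak{p}_F$ forces the Hessian to be positive definite at some (hence, by analyticity, every) point. The inverse function theorem then makes $u \mapsto \nabla\mathfrak{p}_F(u)=\int F\, d\mu_{\langle u,F\rangle}$ a local diffeomorphism of $\R^d$ into itself. Its image is open and contained in $\mathscr{C}_F$, so $\mathrm{int}(\mathscr{C}_F) \ne \varnothing$.

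For (iii) $\Rightarrow$ (i), I argue by contrapositive. If some $\langle u,F\rangle$ with $u\ne 0$ is $\sigma^r$-cohomologous to a constant $c$, then $\int \langle u,F\rangle\, d\mu = c$ for every $\mu \in \mathscr{M}_{\sigma^r}$, which forces $\mathscr{C}_F$ into the affine hyperplane $\{x\in\R^d : \langle u,x\rangle = c\}$, contradicting nonempty interior. The main technical obstacle lies in step (i) $\Rightarrow$ (ii): one must carefully translate the cohomology condition from the suspension flow to the base shift (via the fibre-integral) so that the classical pressure-differentiation and Livšic-type statements can be applied verbatim. Once that translation is in place, the remaining implications are essentially formal consequences of the analyticity and strict convexity of the pressure.
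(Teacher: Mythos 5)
The paper does not supply a proof of this lemma; it records it as a standard fact with a pointer to \cite{lalley87}. So the comparison is really between your argument and the classical one, not against anything written here.

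Your implications (i)$\Rightarrow$(ii) (the Hessian of pressure is the asymptotic variance, which is a positive semi-definite quadratic form vanishing on $v$ precisely when $\langle v,F\rangle$ is flow-cohomologous to a constant) and (iii)$\Rightarrow$(i) (a flow coboundary plus a constant integrates to that constant against every invariant measure, trapping $\mathscr C_F$ in a hyperplane) are the standard ones and are fine; the translation to the base shift via the fibre integral that you flag as the technical step is indeed where the work lives, and the equivalence ``$\langle v,F\rangle$ cohomologous to a constant $c$ for $\sigma^r$'' $\Longleftrightarrow$ ``$\langle v,f\rangle - cr$ is a $\sigma$-coboundary'' is exactly the right reduction.

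The implication (ii)$\Rightarrow$(iii) as you have written it is where there is a real issue. You assert that strict convexity plus real analyticity forces $\nabla^2\mathfrak p_F$ to be positive definite at some point, ``hence, by analyticity, every point.'' The parenthetical is false: $\mathfrak p(x,y)=x^2+y^4$ is real analytic and strictly convex on $\R^2$, yet $\det\nabla^2\mathfrak p$ vanishes on the whole line $\{y=0\}$. Analyticity of $\det\nabla^2\mathfrak p_F$ only gives that its zero set is nowhere dense once you know the determinant is not identically zero, and the claim that strict convexity alone rules out $\det\nabla^2\mathfrak p_F\equiv 0$ is not elementary (it is a rigidity statement for the degenerate Monge--Amp\`ere equation, not an exercise). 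For the pressure function the conclusion is true for a different reason, namely the cohomological description of the kernel of the Hessian, but that is precisely (i), which you are not allowed to use at this stage of the cycle. The clean repair is to prove (ii)$\Rightarrow$(iii) by contrapositive, dually to your (iii)$\Rightarrow$(i): if $\mathrm{int}(\mathscr C_F)=\varnothing$, then the convex set $\mathscr C_F$ lies in an affine hyperplane $\{z:\langle v,z\rangle=c\}$ with $v\neq 0$, and the variational formula
\[
\mathfrak p_F(u)=\sup_{\mu\in\mathscr M_{\sigma^r}}\left\{h_\mu(\sigma^r)+\left\langle u,\int F\,d\mu\right\rangle\right\}
\]
then yields $\mathfrak p_F(u+tv)=\mathfrak p_F(u)+tc$ for all $t$, so $\mathfrak p_F$ is affine in the $v$-direction and not strictly convex. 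This avoids any appeal to the Hessian and matches the hyperplane mechanism you already use for (iii)$\Rightarrow$(i). With that replacement your proof is complete and correct; as written, the (ii)$\Rightarrow$(iii) step relies on a claim that is both unnecessary and, in the form stated, false.
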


Assuming that any (and hence all) of the statements in Lemma \ref{lem:strictly_convex} hold, we define
an entropy function
$\mathfrak h_F : \mathrm{int}(\mathscr C_F) \to \mathbb R$ by
\[
\mathfrak h_F(\rho)=\sup\bigg\{h_{\mu}(\sigma)\hbox{ : }\mu \in \mathscr M_{\sigma^r},\,\int F\,d\mu=\rho \bigg\}.
\] 
This leads to the following lemma.

\begin{lemma}\label{intofC}
The map $u\mapsto \nabla \mathfrak p_F(u)$ is a smooth diffeomorphism between $\R^{d}$ and 
$\mathrm{int}(\mathscr C_F)$. Furthermore, $\mathfrak h_F$ is a smooth function on 
$\mathrm{int}(\mathscr C_F)$ and $(\nabla \mathfrak p_F)^{-1}=-\nabla \mathfrak h_F$.
\end{lemma}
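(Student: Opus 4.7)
The plan is to exploit the Legendre duality between $\mathfrak p_F$ and $\mathfrak h_F$ that is encoded in the variational principle. By Lemma \ref{lem:strictly_convex}, $\mathfrak p_F$ is real analytic and strictly convex, so its Hessian is positive definite everywhere. The broad strategy is to show that $\nabla \mathfrak p_F$ is an analytic diffeomorphism onto $\mathrm{int}(\mathscr C_F)$ by separately verifying (a) well-definedness into the interior, (b) injectivity from strict convexity, (c) surjectivity via a coercivity argument, and (d) invertibility of the differential from the Hessian bound; then use the Legendre transform formula to identify $-(\nabla \mathfrak p_F)^{-1}$ with $\nabla \mathfrak h_F$ and deduce smoothness of $\mathfrak h_F$.

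For well-definedness, recall that $\nabla \mathfrak p_F(u) = \int F\, d\mu_{\langle u,F\rangle} \in \mathscr C_F$. Since $\nabla \mathfrak p_F$ is a local diffeomorphism (positive definite Hessian), its image is open in $\R^d$, hence contained in $\mathrm{int}(\mathscr C_F)$. Injectivity is immediate from strict convexity: if $\nabla \mathfrak p_F(u_1)=\nabla \mathfrak p_F(u_2)$ with $u_1 \ne u_2$, the restriction of $\mathfrak p_F$ to the line through $u_1,u_2$ would be a strictly convex function with two critical points, contradicting strict convexity.

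Surjectivity is the main obstacle. Fix $\rho \in \mathrm{int}(\mathscr C_F)$ and define $\Phi_\rho(u) = \mathfrak p_F(u) - \langle u,\rho\rangle$. This is strictly convex and analytic, so it suffices to show it is coercive. For any unit vector $v$, the variational principle gives
\[
\lim_{t \to \infty}\frac{\mathfrak p_F(tv)}{t} = \sup_{\mu \in \mathscr M_{\sigma^r}}\left\langle v, \int F\, d\mu\right\rangle = \sup_{\rho' \in \mathscr C_F}\langle v,\rho'\rangle,
\]
since the entropy term is bounded. Because $\rho$ lies in the interior of $\mathscr C_F$, the support function satisfies $\sup_{\rho' \in \mathscr C_F}\langle v,\rho'\rangle > \langle v,\rho\rangle$ for every $v$, and by compactness of the unit sphere this gap is bounded below uniformly. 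Hence $\Phi_\rho(u) \to +\infty$ as $|u| \to \infty$, so the infimum is attained at a unique $u^*(\rho)$ with $\nabla \mathfrak p_F(u^*(\rho)) = \rho$. That $\nabla \mathfrak p_F$ is an analytic diffeomorphism then follows from the inverse function theorem applied to its globally bijective, analytic realization with nondegenerate Jacobian.

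For the final assertions, the implicit function theorem gives analytic dependence $u^*(\rho)$, and combining the equilibrium state identity $\mathfrak p_F(u^*) = h_{\mu_{\langle u^*,F\rangle}}(\sigma^r) + \langle u^*,\rho\rangle$ (using $\int F\, d\mu_{\langle u^*,F\rangle}=\rho$) with the variational definition of $\mathfrak h_F$ yields
\[
\mathfrak h_F(\rho) = \mathfrak p_F(u^*(\rho)) - \langle u^*(\rho), \rho\rangle.
\]
Smoothness of $\mathfrak h_F$ is inherited from $u^*$. Differentiating via the chain rule and exploiting $\nabla \mathfrak p_F(u^*(\rho)) = \rho$ causes the terms involving $\nabla u^*$ to cancel, leaving $\nabla \mathfrak h_F(\rho) = -u^*(\rho) = -(\nabla \mathfrak p_F)^{-1}(\rho)$, completing the proof.
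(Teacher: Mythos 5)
Your proof is correct and arrives at the same three facts, but it is more self-contained than the paper's. The paper first exhibits $\mathfrak p_F$ and $-\mathfrak h_F$ as Legendre conjugates and cites Theorem 26.5 of Rockafellar to get that $\nabla \mathfrak p_F$ is a diffeomorphism onto its image with inverse $-\nabla\mathfrak h_F$; the only thing proved by hand there is the surjectivity onto $\mathrm{int}(\mathscr C_F)$, via the Marcus--Tuncel lower bound $\mathfrak p_F(u) \ge \langle u,z\rangle + \|u\|\epsilon$. You instead argue everything directly: injectivity from strict convexity, coercivity of $u\mapsto \mathfrak p_F(u)-\langle u,\rho\rangle$ from the asymptotic slope $\lim_{t\to\infty}\mathfrak p_F(tv)/t = \sup_{\rho'\in\mathscr C_F}\langle v,\rho'\rangle$, and the identity $\nabla\mathfrak h_F = -(\nabla\mathfrak p_F)^{-1}$ by an explicit chain-rule cancellation. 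Your coercivity argument and the paper's Marcus--Tuncel bound are really the same estimate packaged differently, both hinging on $\rho$ being interior; your route avoids outsourcing to convex analysis and makes the argument visibly elementary, at the cost of a bit more bookkeeping.

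One small point to tighten: you write that strict convexity of $\mathfrak p_F$ implies its Hessian is positive definite everywhere. That implication is false in general (e.g. $x\mapsto x^4$). What is true here is that the Hessian $\nabla^2\mathfrak p_F(u)$ equals the asymptotic covariance matrix of $F$ under $\mu_{\langle u,F\rangle}$, and this is positive definite precisely because no nonzero $\langle u,F\rangle$ is cohomologous to a constant (condition (i) of Lemma~\ref{lem:strictly_convex}), which is what you need both for the local-diffeomorphism step and for the nondegeneracy used implicitly in the implicit function theorem. Likewise, the identification $\mathfrak h_F(\rho) = h_{\mu_{\langle u^*,F\rangle}}(\sigma^r)$ deserves a sentence: for any $\mu$ with $\int F\,d\mu=\rho$ one has $h_\mu(\sigma^r) + \langle u^*,\rho\rangle \le \mathfrak p_F(u^*)$, with equality exactly for the equilibrium state, so $\mu_{\langle u^*,F\rangle}$ realizes the supremum defining $\mathfrak h_F(\rho)$. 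With these two clarifications your argument is complete.
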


\begin{proof}
This essentially follows from Theorem 26.5 of \cite{Rock}. Since
\[
\mathfrak p_F(u) = h_{\mu_{\langle u,F\rangle}}(\sigma^r) + \left\langle u,\int F \, d\mu_{\langle u,F \rangle} \right\rangle
\]
for a unique measure $\mu_{\langle u,F \rangle} \in \mathscr{M}_{\sigma^r}$,
we have
\[
\mathfrak p_F(u) = \sup_{z \in \mathscr C_F} \Big(\mathfrak h_F(z) + \langle u,z \rangle \Big),
\]
with the supremum attained uniquely at $z = \int F \, d\mu_{\langle u,F \rangle}$.
This shows that the functions $\mathfrak p_F$ and $-\mathfrak h_F$ are Legendre duals.
Theorem 26.5 of \cite{Rock} then gives that 
$\nabla \mathfrak p_F$ is a diffeomorphism onto its image with inverse
$-\nabla \mathfrak h_F$. To complete the proof, it only remains to show that
$\nabla \mathfrak p_F(\mathbb{R}^d) = \mathrm{int}(\mathscr C_F)$.

We now use an argument adapted from the proof of Lemma 7 in \cite{marcus-tuncel}.
Let $z \in \mathrm{int}(\mathscr C_F)$ and choose $\epsilon >0$ such that
$B(z,2\epsilon) \subset \mathscr C_F$.
Then, for $u \ne 0$, 
$z+ (u/\|u\|)\epsilon \in \mathscr C_F$.
Thus
\[
\langle u,z \rangle + \|u\| \epsilon \le
\sup_{w \in \mathscr C_F} \langle u,w \rangle
= \sup\left\{\left\langle u,\int F \, d\mu \right \rangle \hbox{ : } \mu \in \mathscr M(\sigma^r)\right\}
\le \mathfrak p_F(u).
\]
Let $e_z : \R^d \to \R$ be defined by $e_z(u) = \mathfrak p_F(u) -\langle u,z\rangle$. Then, by the above
inequality, $e_z(u) \ge 0$ for all $u \in \R^d$. Thus $e_z$ has a finite minimum attained at a unique $u(z) \in \R^d$. Hence $\nabla e_z(u(z))=0$, which is exactly $\nabla \mathfrak p_F(u(z))=z$.
\end{proof}

We will continue to use the notation of this proof: given $\rho \in
\mathrm{int}(\mathscr C_F)$, let $u(\rho)\in \R^d$ be defined by 
$\nabla \mathfrak p_F(u(\rho))=\rho$. Furthermore, we will write $\mu_\rho \in \mathscr M_{\sigma^r}$ for the equilibrium state for $\langle u(\rho),F\rangle$. 

\section{Counting results}\label{sec:counting}

The main results of the paper will follow from counting results for periodic orbits
subject to constraints initiated by Lalley \cite{lalley87} (see also Sharp \cite{Sh92})
and given in streamlined form by  Babillot and Ledrappier \cite{bab-led}.
The results in \cite{bab-led} are stated for hyperbolic flows but the machinery all works for 
suspension flows.

We make the following definitions. Let
$\mathscr A_F$ be the closed subgroup of $\R^d$ generated by
\[
\left\{\int_\eta F \hbox{ : } \eta \in \mathscr P_{\sigma^r}\right\}
\]
and let $\widetilde{\mathscr A_F}$ be the closed subgroup of $\R^{d+1}$ generated by
\[
\left\{\left(\ell(\eta),\int_\eta F\right) \hbox{ : } \eta \in \mathscr P_{\sigma^r}\right\}.
\]
We choose a fundamental domain $\mathfrak F$ for $\mathscr A_F$ and, for $\rho \in \R^d$, 
we define
$\lfloor \rho \rfloor \in \mathscr A_F$ by $\rho -\lfloor \rho \rfloor \in \mathfrak F$.

Let $g_0:\R\to\R$ be a 
 continuous function with compact support
 and let  $g:\mathscr A_F\to\R$ be finitely supported. Following \cite{bab-led}, we will study the functional $N_T^\rho$ defined by  
$$N_T^\rho(g_0\otimes g)=\sum_{\eta\in \mathscr P_{\sigma^r}}
g_0(\ell(\eta)-T)
g\left(\int_\eta F-\lfloor T\rho \rfloor\right).$$
The following is a special case of Theorem 1.2 in \cite{bab-led}.

\begin{proposition}\label{prop:bab_led_result}
Suppose that 
$\widetilde{\mathscr A_F}= \R \times \mathscr A_F$. Then, for all 
$\rho \in \mathrm{int}(\mathscr C_F)$, we have
\[
N_T^\rho(g_0 \otimes g) 
\sim
\frac{\sqrt{|\det \nabla^2\mathfrak h_F(\rho)|}}{(2\pi)^{d/2}}
a_\rho(g_0,g)
\frac{e^{\mathfrak h_F(\rho)T+\langle u(\rho),T\rho-\lfloor T\rho\rfloor \rangle}}{T^{1+d/2}}
,
\]
where
\[
a_\rho(g_0,g)= \int_{\R} e^{\mathfrak p_F(u(\rho))x} g_0(x) \, dx
\sum_{y \in \mathscr A_F}
e^{-\langle u(\rho),y\rangle} g(y) 
,
\]
as $T \to \infty$.
\end{proposition}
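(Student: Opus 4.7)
The plan is to verify that the hypotheses of Theorem 1.2 in \cite{bab-led} are met in our setting and then invoke it directly; Proposition \ref{prop:bab_led_result} is essentially a translation of their result to the suspension flow coming from Theorem \ref{symbolic}. Although \cite{bab-led} is stated for hyperbolic flows, the proof operates entirely at the symbolic level (via Markov sections and twisted transfer operators), so it carries over without change to $(\sigma^r, \Sigma(\Gamma, r))$.

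First, I would match the data. Since $F$ is fibre-H\"older, the function $f(x)=\int_0^{r(x)} F(x,t)\,dt$ on $\Sigma(\Gamma)$ is H\"older continuous, and for a periodic point $x$ of least period $n$ the Birkhoff sum $f^n(x)$ coincides with $\int_\eta F$ for the corresponding periodic orbit $\eta$ of $\sigma^r$. Hence $\mathscr A_F$ is the closed subgroup of $\R^d$ generated by these Birkhoff sums, and $\widetilde{\mathscr A_F}$ is generated by the pairs $(r^n(x), f^n(x))$. Strict convexity of $\mathfrak p_F$ on $\R^d$, required for the saddle-point step, is guaranteed by Lemma \ref{lem:strictly_convex} once $\rho \in \mathrm{int}(\mathscr C_F)$, and the joint non-lattice condition $\widetilde{\mathscr A_F} = \R \times \mathscr A_F$ is exactly the stated hypothesis.

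With the setup in place, the argument proceeds by a standard Fourier--Laplace--saddle-point scheme. Using Fourier inversion on $\mathscr A_F$ and an inverse Laplace transform in the period variable, one expresses $N_T^\rho(g_0\otimes g)$ as a contour integral involving the twisted dynamical series
\[
Z(s,\zeta) = \sum_{\eta \in \mathscr P_{\sigma^r}} e^{-s\ell(\eta) + \langle \zeta,\,\int_\eta F\rangle},
\]
whose meromorphic continuation and leading singularity at $s = \mathfrak p_F(\zeta)$ are controlled by the leading eigenvalue $\lambda(s,\zeta)$ of the transfer operator $\mathcal L_{-sr + \langle \zeta, f\rangle}$. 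Shifting the contour through the saddle point $(s,\zeta) = (\mathfrak p_F(u(\rho)), u(\rho))$ and expanding $\lambda$ to second order there, the remaining oscillatory integrals become Gaussian. Legendre duality from Lemma \ref{intofC} identifies $\nabla^2 \mathfrak h_F(\rho)$ with the inverse Hessian of $\mathfrak p_F$ at $u(\rho)$, producing the prefactor $\sqrt{|\det \nabla^2 \mathfrak h_F(\rho)|}/(2\pi)^{d/2}$; the test functions assemble into $a_\rho(g_0,g)$; and the discretisation $T\rho - \lfloor T\rho\rfloor$ enters through the character $e^{\langle u(\rho),\, T\rho - \lfloor T\rho\rfloor\rangle}$ picked up when one writes $g(y - \lfloor T\rho\rfloor)$ in Fourier form.

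The technical heart of the argument, and the main obstacle, is a Dolgopyat-type estimate showing that $\mathcal L_{-sr + \langle \zeta, f\rangle}$ has spectral radius uniformly strictly less than $e^{\mathfrak p_F(u(\rho))}$ on the rest of the critical contour, away from the saddle. This is precisely what the non-lattice hypothesis $\widetilde{\mathscr A_F} = \R \times \mathscr A_F$ delivers: without it, $Z$ would have further singularities on the critical line and the single-pole asymptotic would degenerate. Once the spectral estimate is in hand, the remaining contour contributions are of strictly lower order and the claimed asymptotic for $N_T^\rho(g_0 \otimes g)$ drops out.
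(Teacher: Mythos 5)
Your approach matches the paper's exactly: the paper gives no proof here beyond citing Theorem 1.2 of Babillot--Ledrappier and noting that, although stated there for hyperbolic flows, the argument is carried out at the symbolic level and so applies directly to the suspension flow $(\sigma^r,\Sigma(\Gamma,r))$. Your verification that the non-lattice condition $\widetilde{\mathscr A_F} = \R\times\mathscr A_F$ and the strict convexity/nondegeneracy hypotheses are in place is precisely what the paper relies on.

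One caution about your sketch of the Babillot--Ledrappier argument itself: the technical heart is \emph{not} a Dolgopyat-type estimate. A central feature of the Lalley/Babillot--Ledrappier framework is that it is engineered to avoid any uniform (in the imaginary Laplace parameter) bound on the spectral radius of the twisted transfer operator. The result is formulated for the smoothed functional $N_T^\rho(g_0\otimes g)$ precisely so that the decay of the Fourier transform of the compactly supported test function $g_0$ tames the high-frequency part of the contour; away from the saddle, on a compact region of the frequency variables, strict spectral contraction follows from the non-lattice hypothesis together with continuity of the leading eigenvalue, with no uniformity issue to address. This distinction matters in the present setting: the paper works under the weak ``not locally constant'' condition rather than exponential mixing, and a genuine Dolgopyat bound would not in general be available under such a hypothesis. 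The sharp counting windows $T-\delta < \ell(\gamma)\le T$ in Theorem \ref{thm:main_N} are then recovered by letting $g_0$ approximate an indicator, not by shifting a contour for an unsmoothed sum.
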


We shall apply this result when
$d=b+N$ and $F = \widetilde f$, where $f : \Sigma(\Gamma) \to \Z^{b+N}$ is the
locally constant function defined in
(\ref{hom-function}). Hence, by Proposition \ref{prop:periodic_orbits_generate_homology},
$\mathscr A_F = \Z^d$.
However, there are some additional complications that we need to account for.
First, we note that
\[
\sum_{\gamma\in \mathscr P_{N}}
g_0(\ell(\gamma)-T)
g\left([\gamma]_N-\lfloor T\rho \rfloor\right)
=N_T^\rho(g_0 \otimes g) +O(1),
\]
so the removed orbits do not cause a problem. 
Second, we need to show that $\widetilde{\mathscr A_F}= \R \times \mathscr A_F$. 
We do this in the next lemma.

\begin{lemma}
Suppose that $X^t$ is not locally constant.
Then, for $F$ as in case (i) and case (ii) above, we have
\[
\widetilde{\mathscr A_F}= \R \times \mathscr A_F.
\]
\end{lemma}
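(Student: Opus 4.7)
The plan is to prove this via Pontryagin duality: $\widetilde{\mathscr{A}_F} = \R \times \mathscr{A}_F$ if and only if every continuous character of $\R \times \mathscr{A}_F$ that vanishes on $\widetilde{\mathscr{A}_F}$ is trivial. Since $\mathscr{A}_F = \Z^{b+N}$ by Proposition \ref{prop:periodic_orbits_generate_homology}, every character of $\R \times \mathscr{A}_F$ takes the form $(t,v) \mapsto e^{i(at + \langle b,v\rangle)}$ for some $a \in \R$ and some $b \in \R^{b+N}$ defined modulo $2\pi\Z^{b+N}$, the trivial character being $a=0$ and $b \in 2\pi\Z^{b+N}$. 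Supposing such a non-trivial character vanishes on $\widetilde{\mathscr{A}_F}$, it will suffice to derive a contradiction from the resulting identity
\[
ar^n(x) + \langle b, f^n(x)\rangle \in 2\pi\Z,
\]
holding for every $\sigma$-periodic point $x$ of period $n$, where $r$ is the roof function and $f$ is the locally constant cocycle from \eqref{hom-function}.

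I would split into two cases according to whether $a = 0$. In the easy case $a = 0$, the assumption reduces to $\langle b, f^n(x)\rangle \in 2\pi\Z$ for every periodic $x$. Since the values $f^n(x) = [\pi(\eta)]_N$ generate $\Z^{b+N}$ by Proposition \ref{prop:periodic_orbits_generate_homology}, this forces $\langle b, v\rangle \in 2\pi\Z$ for every $v \in \Z^{b+N}$, i.e. $b \in 2\pi\Z^{b+N}$, contradicting non-triviality. In the main case $a \neq 0$, set $\phi := ar + \langle b, f\rangle$, a H\"older function all of whose periodic sums $\phi^n(x)$ lie in the discrete subgroup $2\pi\Z$. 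A lattice version of Livsic's theorem (see Parry--Pollicott \cite{PP83}) then produces a H\"older $\psi:\Sigma(\Gamma)\to\R$ and a locally constant $g: \Sigma(\Gamma)\to 2\pi\Z$ with
\[
\phi = g + \psi\circ\sigma - \psi.
\]
Solving for $r$ exhibits $r$ as cohomologous to the locally constant function $a^{-1}(g - \langle b, f\rangle)$, contradicting the standing assumption that $X^t$ is not locally constant.

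The main obstacle will be invoking the lattice Livsic statement in precisely the form needed, namely that a H\"older function whose periodic sums all lie in a discrete subgroup $c\Z$ is cohomologous (via a H\"older coboundary) to a locally constant function valued in $c\Z$; this is classical but should be cited carefully. A minor bookkeeping point is to verify that ``not locally constant'' in the sense of Section \ref{sec:anosovflows} rules out $r$ being \emph{cohomologous} to a locally constant function, not merely locally constant on the nose; this is the correct interpretation because different choices of Markov section produce roof functions differing by a H\"older coboundary, so cohomologous-to-locally-constant is exactly the obstruction to being able to choose a locally constant roof.
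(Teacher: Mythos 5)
Your proof is correct and follows essentially the same approach as the paper's: Pontryagin duality, the lattice Liv\v{s}ic-type result (which the paper cites as Proposition 5.2 of \cite{PP}, not \cite{PP83}), the not-locally-constant hypothesis to force the $\R$-component of a character trivial on $\widetilde{\mathscr{A}_F}$ to vanish, and Proposition \ref{prop:periodic_orbits_generate_homology} to kill the $\Z^{b+N}$-component. Your closing remark that ``not locally constant'' must be read as ``not cohomologous to a locally constant function'' is precisely the intended interpretation.
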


\begin{proof}
Clearly, $\widetilde{\mathscr A_F}$ is a closed subgroup of
$\R \times \mathscr A_F = \R \times \Z^d$.
Therefore, it is sufficient to show that if $\chi$ is a character of
$ \R \times \Z^d$ which is trivial on $\widetilde{\mathscr A_F}$ then $\chi$ is trivial
on $\R \times \Z^d$.

The characters of $\R \times \Z^d$ taken the form $\chi_{t,u}(x,y) =
e^{2\pi i (tx+\langle u,y \rangle)}$, with $(t,u) \in \R \times \R^d/\Z^d$.
If $\chi_{t,u}$ is trivial on $\widetilde{\mathscr A_F}$ then
\begin{equation}\label{trivial_character}
\chi_{t,u}\left(\ell(\eta),\int_\eta F\right)
=
\exp 2\pi i\left(t\ell(\eta) + \int_\eta \langle u,F \rangle \right) =1 \quad \forall \eta \in \mathscr P_{\sigma^r}
\end{equation}
Passing to the shift map, this may be rewritten in the form
\begin{equation}\label{trivial_character_shift}
\exp 2\pi i(tr^n(x) + \langle u,f^n(x)\rangle )=1 
\end{equation}
whenever $\sigma^n x=x$ and $n \ge 1$.
By Proposition 5.2 of \cite{PP}, equation (\ref{trivial_character_shift}) may itself be rewritten as
\begin{equation}\label{implies_locally_constant}
2\pi tr +2\pi \langle u,f\rangle + \psi \circ \sigma - \psi = \varphi,
\end{equation}
where $\psi \in C(\Sigma(\Gamma),\R)$ and $\varphi \in C(\Sigma(\Gamma),2\pi \Z)$.
In particular, if (\ref{implies_locally_constant}) holds then $2\pi tr$ is cohomologous to a locally constant function.
Applying the not locally constant condition, this forces $t=0$.

Substituting $t=0$ into (\ref{trivial_character}),we have
 \begin{equation}\label{trivial_character_reduced}
\exp 2\pi i\bigg\langle u,\int_\eta F\bigg\rangle =1 \quad \forall \eta \in \mathscr P_{\sigma^r}.
\end{equation}
Since $\{[\gamma]_N \hbox{ : } \gamma \in \mathscr P\}$ generates the torsion free
part of  $H_1(M_N,\Z)$, we have that
$u =0 \in \mathbb R^d/\mathbb Z^d$.
\end{proof}

Thus we have shown that Proposition \ref{prop:bab_led_result} may be applied in our cases.
We may pass from $N_T^\rho(g_0 \otimes g)$ to a counting function by choosing
$g_0$ to approximate the indicator function of the interval $(-\delta,0]$, say,
and choosing $g$ 
to be the indicator function of an element of $\mathscr A_F$.
We also use the following notation.
Let $C_b(\R^+)$ denote the set set of bounded continuous functions from $\R^+$ to $\R$ and let
\[
C_{b,+}(\R^+) =\left\{f \in C_b(\R^+) \hbox{ : } \inf_{T>0} f(T) >0\right\}.
\]

Here are our main results.

\begin{theorem}\label{thm:main_N}
Let $M$ be a smooth closed oriented Riemannian $3$-manifold and let $X^t : M \to M$ be 
a transitive Anosov flow which is not locally constant. There is a strictly positive
real analytic function $\mathfrak h_N : \mathrm{int}(\mathscr C_N) \to \R^+$ such that, for each
$\rho\in \mathrm{int}(\mathscr C_N)$
and $\alpha \in H_1(M_N,\Z)/\mathrm{torsion}$, we have
\begin{align*}
|\{\gamma \in \mathscr P_N \hbox{ : } T-\delta <\ell(\gamma) \le T, \ [\gamma]_N = \lfloor
T\rho \rfloor +\alpha\}|
\sim c_{\rho,\alpha}(T) \frac{e^{\mathfrak h_N(\rho)T}}{T^{1+(b+N)/2}}, 
\end{align*}
as $T \to \infty$,
where $c_{\rho,\alpha} \in C_{b,+}(\R^+)$.
\end{theorem}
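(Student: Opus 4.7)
The plan is to deduce Theorem \ref{thm:main_N} from Proposition \ref{prop:bab_led_result} by taking $d = b+N$ and $F = \widetilde{f}$, where $\widetilde{f}$ is the fibre-Hölder lift (via the bump construction in Section \ref{therm_form}) of the $\mathbb{Z}^{b+N}$-valued, locally constant homology function $f$ from (\ref{hom-function}). The lemma at the end of Section \ref{sec:symbolic_dynamics} then identifies, for a periodic orbit $\eta \in \mathscr{P}_{\sigma^r}$ corresponding to an $n$-periodic point $x$, the quantities $\ell(\eta) = r^n(x) = \ell(\pi(\eta))$ and $\int_\eta F = f^n(x) = [\pi(\eta)]_N$, so counting orbits on $M_N$ becomes counting orbits on $\Sigma(\Gamma,r)$.

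First I would verify the hypotheses of Proposition \ref{prop:bab_led_result}. Proposition \ref{prop:periodic_orbits_generate_homology} gives $\mathscr{A}_F = \mathbb{Z}^{b+N}$; the preceding lemma (using the not-locally-constant hypothesis) gives the non-lattice condition $\widetilde{\mathscr{A}_F} = \mathbb{R} \times \mathbb{Z}^{b+N}$; and $\mathscr{C}_F$ coincides with $\mathscr{C}_N$ under the identification above, so $\mathrm{int}(\mathscr{C}_N) = \mathrm{int}(\mathscr{C}_F) \neq \varnothing$, which by Lemma \ref{lem:strictly_convex} also yields strict convexity of $\mathfrak{p}_F$ and thus the existence of $u(\rho)$ and $\mu_\rho$ provided by Lemma \ref{intofC}.

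Next I would convert Proposition \ref{prop:bab_led_result} into the indicator-form statement. Fix $\delta>0$; sandwich $\mathbbm{1}_{(-\delta,0]}$ between continuous compactly supported $g_0^{\pm}$, take $g = \mathbbm{1}_{\{\alpha\}}$, and apply the usual $\epsilon$-approximation argument, noting that the removed knots $K_1,\dots,K_N$ together with the finitely many orbits not uniquely coded by $\pi$ contribute only an $O(1)$ error that is negligible against the exponentially growing main term. Setting $\mathfrak{h}_N := \mathfrak{h}_F\big|_{\mathrm{int}(\mathscr{C}_N)}$ and
\[
c_{\rho,\alpha}(T) = \frac{\sqrt{|\det \nabla^{2}\mathfrak{h}_F(\rho)|}}{(2\pi)^{(b+N)/2}}\, a_\rho\!\bigl(\mathbbm{1}_{(-\delta,0]},\mathbbm{1}_{\{\alpha\}}\bigr)\, e^{\langle u(\rho),\, T\rho - \lfloor T\rho \rfloor \rangle},
\]
the proposition delivers the asymptotic. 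Since $T\rho - \lfloor T\rho\rfloor$ ranges over the bounded fundamental domain $\mathfrak{F}$, the exponential factor is bounded above and bounded away from zero; and
\[
a_\rho\!\bigl(\mathbbm{1}_{(-\delta,0]},\mathbbm{1}_{\{\alpha\}}\bigr) = e^{-\langle u(\rho),\alpha\rangle}\!\int_{-\delta}^{0} e^{\mathfrak{p}_F(u(\rho))x}\, dx > 0,
\]
so $c_{\rho,\alpha}\in C_{b,+}(\mathbb{R}^+)$.

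Finally, real analyticity of $\mathfrak{h}_N$ follows from real analyticity of the pressure $\mathfrak{p}_F$ together with the Legendre-duality diffeomorphism $\nabla \mathfrak{p}_F$ of Lemma \ref{intofC}, while strict positivity follows because $\mu_\rho$ is a Gibbs state for a Hölder potential on the topologically mixing subshift $(\sigma,\Sigma(\Gamma))$ and hence has positive measure-theoretic entropy. The step I expect to require the most care is the indicator-function bookkeeping: Proposition \ref{prop:bab_led_result} is stated for continuous $g_0$, so one must justify passing to $\mathbbm{1}_{(-\delta,0]}$ via the $g_0^{\pm}$ sandwich (the limiting constants on the two sides match because $a_\rho(\cdot,g)$ is continuous in its first argument in a suitable sense), and one must simultaneously absorb the $O(1)$ exceptional-orbit error into the asymptotic relation $\sim$.
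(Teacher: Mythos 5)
Your proposal follows essentially the same route the paper takes: apply Proposition \ref{prop:bab_led_result} with $d=b+N$ and $F=\widetilde f$, verify $\mathscr A_F=\Z^{b+N}$ via Proposition \ref{prop:periodic_orbits_generate_homology} and $\widetilde{\mathscr A_F}=\R\times\mathscr A_F$ via the lemma using the not-locally-constant hypothesis, then pass to indicator functions by sandwiching and absorb the finitely many removed or non-uniquely-coded orbits as an $O(1)$ error. You go slightly further than the paper's exposition by writing out $c_{\rho,\alpha}(T)$ explicitly and by sketching why $\mathfrak h_N$ is real analytic and strictly positive, but the argument is the same one the authors intend.
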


If $0 \in \mathrm{int}(\mathscr C_N)$ then the next corollary gives a result for periodic orbits with fixed 
homology class in $H_1(M_N,\Z)$. However, by work of Dehornoy, there are examples of geodesic flows where this condition fails, since all linking numbers of pairs of periodic orbits are negative
\cite{dehornoy}.

\begin{corollary}
If $0 \in \mathrm{int}(\mathscr C_N)$ then, for every $\alpha \in H_1(M_N,\Z)/\mathrm{torsion}$,
\[
|\{\gamma \in \mathscr P_N \hbox{ : } \ell(\gamma) \le T, \ [\gamma]_N = \alpha\}|
\sim c(m) \frac{e^{\mathfrak h_N(0)T}}{T^{1+(b+N)/2}}, 
\]
as $T \to \infty$, for some $c(m)>0$.
\end{corollary}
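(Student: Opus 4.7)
The plan is to derive the corollary from Theorem \ref{thm:main_N} applied with $\rho=0$. The hypothesis $0 \in \mathrm{int}(\mathscr{C}_N)$ makes the theorem available at this point, and a crucial simplification occurs: since $\lfloor T \cdot 0 \rfloor = 0$, the phase term $\langle u(0), T\rho -\lfloor T\rho\rfloor\rangle$ arising from Proposition \ref{prop:bab_led_result} is identically zero. Consequently the bounded oscillatory function $c_{0,\alpha}(T)$ of Theorem \ref{thm:main_N} collapses to a positive constant $c(\alpha,\delta)>0$ depending only on $\alpha$ and the window width $\delta$.

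Next I would fix $\delta>0$ and decompose the cumulative count as a sum of window counts,
\[
|\{\gamma \in \mathscr{P}_N : \ell(\gamma)\le T,\; [\gamma]_N=\alpha\}|
=\sum_{k=0}^{\lfloor T/\delta\rfloor}\!\!|\{\gamma\in\mathscr{P}_N : T-(k+1)\delta<\ell(\gamma)\le T-k\delta,\; [\gamma]_N=\alpha\}|+O(1).
\]
Substituting the window asymptotic, factoring out $e^{\mathfrak{h}_N(0)T}/T^{1+(b+N)/2}$, and summing the resulting geometric series yields
\[
|\{\gamma\in\mathscr{P}_N : \ell(\gamma)\le T,\; [\gamma]_N=\alpha\}|\sim \frac{c(\alpha,\delta)}{1-e^{-\mathfrak{h}_N(0)\delta}}\cdot\frac{e^{\mathfrak{h}_N(0)T}}{T^{1+(b+N)/2}}.
\]
To see that the prefactor is in fact independent of $\delta$, I would trace $c(\alpha,\delta)$ through Proposition \ref{prop:bab_led_result}: applied with $g_0$ approximating $\mathbbm{1}_{(-\delta,0]}$, the $g_0$-dependent factor is $\int g_0(x)e^{\mathfrak{h}_N(0)x}\,dx=(1-e^{-\mathfrak{h}_N(0)\delta})/\mathfrak{h}_N(0)$, so $c(\alpha,\delta)$ is proportional to $1-e^{-\mathfrak{h}_N(0)\delta}$. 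The ratio $c(\alpha,\delta)/(1-e^{-\mathfrak{h}_N(0)\delta})$ is therefore a well-defined positive constant $c(\alpha)$, which is the asserted prefactor.

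The main obstacle is justifying the termwise summation of a pointwise asymptotic over a growing range of $k$. This is handled by the exponential damping $e^{-\mathfrak{h}_N(0)k\delta}$, which concentrates the bulk of the sum in a bounded range of $k$ and renders the tail negligible, uniformly in $T$. A cleaner variant avoids discretisation altogether by applying Proposition \ref{prop:bab_led_result} directly with a single compactly supported $g_0$ approximating $\mathbbm{1}_{[-A,0]}$ for large $A$, factoring out a single exponential factor, and then passing to the limit $A\to\infty$ via a standard Abelian-type comparison to absorb the tail.
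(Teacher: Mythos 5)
Your proposal is correct and takes essentially the same approach the paper intends (the paper only offers the remark that replacing the window condition by $\ell(\gamma)\le T$ is a ``simple argument''): you correctly observe that at $\rho=0$ the phase term $\langle u(\rho),T\rho-\lfloor T\rho\rfloor\rangle$ vanishes, that $\mathfrak p_F(u(0))=\mathfrak h_F(0)$ by Legendre duality so the $g_0$-integral produces exactly the factor $(1-e^{-\mathfrak h_N(0)\delta})/\mathfrak h_N(0)$ cancelling the geometric-series denominator, and that the exponentially damped window sum yields the stated $\delta$-independent constant.
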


\begin{remark}
One may replace the condition $T-\delta < \ell(\gamma) \le T$ with $\ell(\gamma) \le T$ by a simple argument.
We can describe the functions $c_{\rho,\alpha}$ and constants $c(\alpha)$ more explicitly; see \cite{bab-led}, \cite{lalley87},
\cite{lalley}, \cite{Sh92}, \cite{Sh93}.
If $\gamma$ is null-homologous in $M$ then $[\gamma]_N$ allows us to read off the linking numbers
of $\gamma$ with $K_i$, $i=1,\ldots,N$.
\end{remark}

Following the arguments of \cite{bab-led}, 
we can show that the 
periodic orbits considered in Theorem \ref{thm:main_N}
become equidistributed with respect to $\pi_*(\mu_\rho)$,
the projection of $\mu_\rho$ to $M$, as $T \to \infty$ (where $\mu_\rho$ is the equilibrium 
state defined at the end of section \ref{therm_form}).
We have the following result.

\begin{theorem}
Let $M$ be a smooth closed oriented Riemannian $3$-manifold and let $X^t : M \to M$ be 
a transitive Anosov flow which is not locally constant. For each
$\rho\in \mathrm{int}(\mathscr C_N)$
and $\alpha \in H_1(M_N,\Z)/\mathrm{torsion}$, and for every continuous function
$\varphi : M \to \R$, we have
\begin{align*}
\lim_{T \to \infty}
\frac{\displaystyle \sum_{\gamma \in \mathscr P_N \hbox{ : }  T-\delta <\ell(\gamma) \le T, \
 [\gamma]_N = \lfloor
T\rho \rfloor +\alpha}
\frac{1}{\ell(\gamma)} \int_\gamma \varphi}
{|\{\gamma \in \mathscr P_N \hbox{ : } T-\delta <\ell(\gamma) \le T, \ [\gamma]_N = \lfloor
T\rho \rfloor +\alpha\}|}
= \int \varphi \, d\pi_*(\mu_\rho).
\end{align*}
\end{theorem}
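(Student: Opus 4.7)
The plan is to reduce to a weighted version of Proposition \ref{prop:bab_led_result} and extract the integral $\int\varphi\,d\pi_*(\mu_\rho)$ via the envelope theorem for Legendre duality. First I would reduce to H\"older continuous $\varphi:M\to\R$: such functions are uniformly dense in $C(M,\R)$, and both sides of the claimed identity depend continuously on $\varphi$ in the sup norm (the right-hand side trivially, the left-hand side because the averages $\ell(\gamma)^{-1}\int_\gamma\varphi$ are uniformly bounded by $\|\varphi\|_\infty$). Given such a $\varphi$, set $\widetilde\varphi=\varphi\circ\pi$; this is fibre-H\"older on $\Sigma(\Gamma,r)$ since the semiconjugacy $\pi$ is H\"older in the Bowen--Walters metric. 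The coding bijection from Section \ref{sec:symbolic_dynamics} gives $\ell(\eta)=\ell(\gamma)$ and $\int_\eta\widetilde\varphi=\int_\gamma\varphi$ for each $\eta\in\mathscr P_{\sigma^r}$ with $\gamma=\pi(\eta)\in\mathscr P_N$, up to finitely many exceptional orbits contributing an $O(1)$ error.

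Next I would introduce the perturbed pressure $\mathfrak p(u,s):=P_{\sigma^r}(\langle u,F\rangle+s\widetilde\varphi)$, which is jointly real-analytic in $(u,s)$, together with its partial Legendre dual $\mathfrak h(s,\rho)$ in the $u$ variable; the implicit function theorem yields a unique analytic family $u(s,\rho)$ with $\nabla_u\mathfrak p(u(s,\rho),s)=\rho$ and $u(0,\rho)=u(\rho)$. The main technical input is a weighted analogue of Proposition \ref{prop:bab_led_result}: for small $s$,
\[
N_T^\rho(g_0\otimes g;s):=\sum_{\eta\in\mathscr P_{\sigma^r}}g_0(\ell(\eta)-T)\,g\!\Bigl(\textstyle\int_\eta F-\lfloor T\rho\rfloor\Bigr)\exp\!\Bigl(s\textstyle\int_\eta\widetilde\varphi\Bigr)
\]
should be asymptotic to $C(s,\rho)\,e^{\mathfrak h(s,\rho)T+\langle u(s,\rho),T\rho-\lfloor T\rho\rfloor\rangle}/T^{1+(b+N)/2}$, uniformly in $s$ in a neighbourhood of $0$. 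This is obtained by rerunning the transfer-operator / Fourier-inversion argument of \cite{bab-led} with the extra twist $s\widetilde\varphi$; the lattice hypothesis $\widetilde{\mathscr A_F}=\R\times\Z^{b+N}$ established in Section \ref{sec:counting} is stable under small perturbation, and Kato's analytic perturbation theory supplies the $s$-uniformity of the relevant spectral data.

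Finally, by the envelope theorem, $\partial_s\mathfrak h(0,\rho)=\partial_s\mathfrak p(u(\rho),0)=\int\widetilde\varphi\,d\mu_\rho=\int\varphi\,d\pi_*(\mu_\rho)$. Differentiating the weighted asymptotic at $s=0$ and comparing with Theorem \ref{thm:main_N} (with $g_0$ and $g$ chosen to approximate the indicators of $(-\delta,0]$ and $\{\alpha\}$) yields
\[
\sum_{\eta\,:\,\ell(\eta)\in(T-\delta,T],\,\int_\eta F=\lfloor T\rho\rfloor+\alpha}\!\!\!\int_\eta\widetilde\varphi\;=\;T\Bigl(\int\varphi\,d\pi_*(\mu_\rho)\Bigr)\mathcal N(T)\bigl(1+o(1)\bigr),
\]
where $\mathcal N(T)$ denotes the unweighted count from Theorem \ref{thm:main_N}. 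Since $\ell(\eta)=T+O(1)$ on this sum, dividing by $T\,\mathcal N(T)$ produces exactly the ratio on the left-hand side of the theorem and identifies its limit with $\int\varphi\,d\pi_*(\mu_\rho)$. The hardest part of the argument is the uniform-in-$s$ asymptotic for $N_T^\rho(g_0\otimes g;s)$, required to justify termwise differentiation at $s=0$; this is the familiar obstruction in turning a counting result into an equidistribution statement, and is resolved by analytic perturbation of the dominant eigenvalue of the twisted transfer operator, in the spirit of \cite{lalley87} and \cite{Sh93}.
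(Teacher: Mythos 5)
Your proposal is correct and follows the same route the paper intends: the paper gives no details and simply defers to the arguments of Babillot--Ledrappier, and the $s$-perturbed pressure, analytic perturbation of the twisted transfer operator, and differentiation of the uniform-in-$s$ asymptotic at $s=0$ via Legendre duality is exactly that standard machinery. The one place you could be more careful is justifying termwise differentiation: since $N_T^\rho(g_0\otimes g;s)$ is analytic in $s$ and the uniform asymptotic holds on a complex disc about $0$, Cauchy estimates (rather than a naive pointwise derivative) give that the error ratio and its $s$-derivative both tend to $0$, which is what legitimises the final comparison.
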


We end with a result for amenable covers of $M_N$.
Let $G$ be any quotient group of $\pi_1(M_N)$, with identity element $1_G$. 
Then each periodic orbit 
$\gamma \in \mathscr P_N$ defines a conjugacy class $\langle \gamma \rangle_G$ in $G$.
(The case where $G$ is finite was considered at the start of the paper.) 
Suppose that $G$ sits above $H_1(M_N,\Z)$, i.e. that $G = \pi_1(M_N)/\Lambda$, where
$\Lambda$ is a normal subgroup of the commutator $[\pi_1(M_N),\pi_1(M_N)]$.
Supposing that $0 \in \mathrm{int}(\mathscr C_N)$, the exponential growth rate of periodic orbits for which $\langle \gamma \rangle_G$ trivial is
at most $\mathfrak h_N(0)$.
The results of
\cite{dougall-sharp} may be adapted to show that equality holds if and only if $G$ is amenable.
 More precisely, we have the following theorem.

\begin{theorem}\label{thm:amenable_N}
Let $M$ be a smooth closed oriented Riemannian $3$-manifold and let $X^t : M \to M$ be 
a transitive Anosov flow which is not locally constant and such that 
$0 \in \mathrm{int}(\mathscr C_N)$. 
Let $G = \pi_1(M_N)/\Lambda$, where
$\Lambda$ is a normal subgroup of $[\pi_1(M_N),\pi_1(M_N)]$.
Then
\[
\lim_{T \to \infty} \frac{1}{T} \log
|\{\gamma \in \mathscr P_{N} \hbox{ : } T-\delta <\ell(\gamma) \le T,
\ \langle \gamma \rangle_G 
 = \{1_G\}\}|
\le \mathfrak h_{N}(0).
\]
with equality if and only if $G$ is amenable.
\end{theorem}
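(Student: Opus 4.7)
Since $\Lambda \subseteq [\pi_1(M_N),\pi_1(M_N)]$, the group $G$ surjects canonically onto $H_1(M_N,\Z)/\mathrm{torsion}$, so the condition $\langle \gamma \rangle_G = \{1_G\}$ forces $[\gamma]_N = 0$. The counting quantity is therefore bounded above by $|\{\gamma \in \mathscr P_N : T-\delta < \ell(\gamma) \le T,\ [\gamma]_N = 0\}|$, and Theorem \ref{thm:main_N} applied at $\rho = 0$, $\alpha = 0$ (legitimate because $0 \in \mathrm{int}(\mathscr C_N)$) gives that this grows like $c_{0,0}(T) e^{\mathfrak h_N(0) T}/T^{1+(b+N)/2}$. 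Taking $(\log)/T$ yields the stated upper bound $\mathfrak h_N(0)$.

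\textbf{Symbolic skew product.} Composing $\iota_*$ from Proposition \ref{prop:surjection} with the quotient $\pi_1(M_N) \to G$ produces a homomorphism $\Phi:\pi_1(\bGamma) \to G$. Define a locally constant cocycle $\psi:\Sigma(\Gamma) \to G$ by setting $\psi(x) = \Phi(\langle K(x_0,x_1)\rangle)$. For a $\sigma$-periodic point $x$ of period $n$ corresponding to $\gamma \in \mathscr P_N$, the product $\psi(x)\psi(\sigma x)\cdots \psi(\sigma^{n-1}x)$ represents $\langle\gamma\rangle_G$. Hence $\langle\gamma\rangle_G = \{1_G\}$ precisely when $\gamma$ lifts to a periodic orbit of the skew product $\sigma_\psi(x,g) = (\sigma x, g\psi(x))$ on $\Sigma(\Gamma) \times G$.

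\textbf{Amenability dichotomy.} The strategy now follows \cite{dougall-sharp}. Let $L$ denote the normalised transfer operator on $\Sigma(\Gamma)$ associated with the potential $-\mathfrak h_N(0) r$, whose spectral radius equals $1$ since $\mathfrak h_N(0)$ is the correct normalising constant at $\rho = 0$. Let $\widetilde L$ be the corresponding $\psi$-twisted operator, acting on $L^2(\Sigma(\Gamma)) \otimes \ell^2(G)$ via the right regular representation. A standard generating-function computation (parallel to those in Section \ref{sec:counting}, with $e^{\langle u,f^n(x)\rangle}$ replaced by matrix coefficients of the regular representation) identifies the exponential growth rate of $\sigma_\psi$-periodic orbits over $1_G$ with $\mathfrak h_N(0) + \log\mathrm{spr}(\widetilde L)$. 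Kesten's theorem, as transported to the transfer-operator framework in \cite{dougall-sharp}, then gives $\mathrm{spr}(\widetilde L) = 1$ if and only if $G$ is amenable, combining with the upper bound to produce the asserted equivalence.

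\textbf{Main obstacle.} The delicate technical point is verifying the hypotheses of \cite{dougall-sharp} in our flow setting. First, one needs \emph{aperiodicity} of the $G$-cocycle $\psi$: this reduces, via Proposition \ref{prop:periodic_orbits_generate_homology} together with the not-locally-constant hypothesis, to the statement that no non-trivial character of $G$ makes $\psi^n$ a coboundary on all periodic points, which follows by a variant of the lemma preceding Theorem \ref{thm:main_N}. Second, and more substantively, one must execute the amenability comparison: on the amenable side, approximate invariant means on $G$ produce approximate eigenfunctions of $\widetilde L$ with eigenvalue tending to $1$, forcing $\mathrm{spr}(\widetilde L) = 1$; on the non-amenable side, a Brooks-Kesten-type variational argument yields $\mathrm{spr}(\widetilde L) < 1$ and hence a strictly smaller exponential growth rate. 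The window $(T-\delta,T]$ (rather than $(0,T]$) requires no extra work beyond the standard difference argument noted in the remark after Theorem \ref{thm:main_N}.
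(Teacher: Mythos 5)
The paper does not in fact supply a proof of Theorem \ref{thm:amenable_N}; it notes the upper bound follows from Theorem \ref{thm:main_N} and states that ``the results of \cite{dougall-sharp} may be adapted to show that equality holds if and only if $G$ is amenable.'' Your proposal fills in the intended adaptation, and the high-level strategy you sketch — the surjection $G\twoheadrightarrow H_1(M_N,\Z)/\mathrm{torsion}$ giving the upper bound, the $G$-valued cocycle $\psi$ built from $\iota_*$, the skew product, and a Kesten-type transfer-operator dichotomy following \cite{dougall-sharp} — is exactly the right one.

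There is, however, a genuine error in the normalisation of your twisted operator. You take $L$ to be the transfer operator with potential $-\mathfrak h_N(0)\,r$ and assert its spectral radius is $1$ ``since $\mathfrak h_N(0)$ is the correct normalising constant at $\rho=0$.'' This is false in general: the spectral radius of $L_{-sr}$ is $e^{P_\sigma(-sr)}$, which equals $1$ only at $s=h(\sigma^r)=h(X)$, not at $s=\mathfrak h_N(0)$. When one conditions on $[\gamma]_N=0$, the relevant equilibrium state is $\mu_0$, the equilibrium state for $\langle u(0),F\rangle$ with $u(0)=(\nabla\mathfrak p_F)^{-1}(0)$; at the level of the shift its potential is $-\mathfrak h_N(0)\,r+\langle u(0),f\rangle$, and this is the potential whose pressure vanishes (since $\mathfrak p_F(u(0))=\mathfrak h_N(0)$). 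The extra term $\langle u(0),f\rangle$ cannot be dropped: $u(0)=0$ would require the measure of maximal entropy to have vanishing winding cycle in $H_1(M_N,\R)$, i.e.\ zero mean linking number with each removed orbit, which in general fails (cf.\ Dehornoy's result that for geodesic flows all pairwise linking numbers are negative). With your potential the untwisted operator does not have spectral radius $1$, so the subsequent Kesten comparison is misnormalised. The fix is routine — twist $L_{-\mathfrak h_N(0) r+\langle u(0),f\rangle}$ by the right regular representation of $G$ via $\psi$, and then the Dougall--Sharp dichotomy applies as you describe — but it is not merely cosmetic, because it is precisely this real (non-unitary) twist by $u(0)$ that turns the comparison constant from $h(X)$ into $\mathfrak h_N(0)$. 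Your remaining remarks (upper bound, relation $\langle\gamma\rangle_G=\{1_G\}$ iff $\psi^n(x)=1_G$, the $(T-\delta,T]$ window, the aperiodicity verification) are essentially correct, though the aperiodicity/non-degeneracy check for the group extension needs to be done against the corrected potential.
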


\end{document}